\newtheorem{definition}{Definition}[section]
\newtheorem{theorem}{Theorem}[section]
\newtheorem{corollary}{Corollary}[section]
\newtheorem{lemma}{Lemma}[section]
 \theoremstyle{plain}
\pgfplotsset{compat=1.14}
\numberwithin{equation}{section}
\begin{document}

\begin{center}
\Large{\textbf{Convergence of the discrete Redner–Ben-Avraham–Kahng coagulation equation}}
\end{center}
\medskip
\medskip
\centerline{${\text{Pratibha~Verma $^1$}}$}
\medskip
{\footnotesize

  \centerline{ ${}^{1}$Department of Mathematics, Indian Institute of Technology Roorkee,}
   \centerline{Roorkee-247667, Uttarakhand, India}

}

\bigskip

\section*{ABSTRACT}

This article looks at the relationship between the discrete and the continuous Redner–Ben-Avraham–Kahng (RBK) coagulation models. On the basis of a priori estimation, a weak stability principle and the weak compactness in $L_1$ for the continuous RBK model is shown. By employing a sequence of discrete models to approximate the continuous one, we show that how discrete model eventually converges to the the modified continuous one using the stability principle.

\medskip

\textbf{KeyWords :} Coagulation; Redner–Ben-Avraham–Kahng Coagulation System; Existence, Convergence.\\

\textbf{2020 Mathematics subject classification}: 45J05, 45K05, 34K30, 46B50, 26D07.

\section{Introduction}

Particles grow irreversibly in several scientific domains, including polymer chemistry, colloid science, cloud dynamics, and star formation, through successively merging clusters of particles. Coagulation is the process through which particles, molecules, or substances combine to form a larger mass or clump. A significant amount of research has previously been conducted on the kinetics of coagulation/aggregation models \cite{Bodnar,Muller, Smoluchowski}. The process of breaking or splintering bigger particles into smaller fragments is referred to as the fragmentation process. The annihilation process is a phenomenon in which particles and antiparticles collide and consequently destroy each other. This type of processes can happen physically or chemically and have been observed in a range of fields such as biology, chemistry, and materials research. 

Depending on the physical circumstances, the cluster size (either volume or number of particles) in the most fundamental coagulation models can be either a continuous non-negative real number \cite{Muller} or a discrete positive integer \cite{Smoluchowski}. Smoluchowski's coagulation model is a prototype mean field model for the coagulation process. This model along with classical coagulation-fragmentation model have attracted a lot of attention from researchers, see, \cite{ball, Barik0, Barik1, Giri1, Laurencot2, Laurencot1, Laurencot3, stewart1, stewart2}.


Let us now turn to one such model, which is a cluster-eating model. The primary mechanism of such models is binary cluster reactions, in which particles undergo collisions resulting in the formation of a single particle. However, it is observed that the size of the resultant particle decreases. The following equation provides descriptions of such types of model.
\begin{align}\label{crbk}
\frac{\partial \mathfrak{f}}{\partial t}= Q(f)(\varsigma, t),
\end{align}
where
\begin{align*}
Q(f)(\varsigma, t)=\int_0^{\infty} \mathfrak{K}(\varsigma+\varrho,\varrho) \mathfrak{f}(\varsigma+\varrho,t) \mathfrak{f}(\varrho,t) d\varrho- \int_0^{\infty} \mathfrak{K}(\varsigma,\varrho) \mathfrak{f}(\varsigma,t) \mathfrak{f}(\varrho,t) d\varrho
\end{align*}
with initial condition
\begin{align}\label{ic}
\mathfrak{f}(\varsigma,0)= \mathfrak{f}^{\mbox{in}}(\varsigma)\ge 0
\end{align}
where $ \mathfrak{f}(\varsigma, t)$ represents the particle concentration of volume $\varsigma \in \mathbb{R}_+=(0, +\infty)$ at time $t \ge 0$. The quantity $\mathfrak{K}(\varsigma,\varrho)$ depicts the coagulation rate at which particles of volume $\varsigma$ interact with particles of volume $\varrho$ to generate particles of volume $|\varsigma-\varrho|$. The coagulation rate `$\mathfrak{K}$' is assumed to be non-negative and symmetric i.e. $0 \le \mathfrak{K}(\varsigma, \varrho)= \mathfrak{K}(\varrho, \varsigma),~\forall~ (\varsigma, \varrho) \in \mathbb{R}_+^2$. Throughout this paper, we shall refer to $\mathbb{R}_+^2$ as $\mathbb{R}_+ \times \mathbb{R}_+$. The equations \eqref{crbk}-\eqref{ic} is known as the continuous Redner–Ben-Avraham–Kahng (CRBK) coagulation equation. The CRBK equation has not been much explored. Best to our knowledge, the only papers available in the literature are \cite{ankik, ankik1, Verma1}.

Redner et al. provided the first description of a finite dimensional model given in \cite{Redner1}. This model is known as the finite dimensional discrete Redner–Ben-Avraham–Kahng (RBK) coagulation model. In order to understand the dynamics of vicious civilizations, this model has been studied in \cite{Redner2}. Later, the infinite dimensional discrete version RBK (DRBK) coagulation model is discussed in various references, see \cite{dacosta1, dacosta2}. The size distribution function $f_i(t)$ of clusters of size $i$ (or $i$-clusters) at time $t$ in the discrete case follows the following system of discrete RBK coagulation equations, which will be referred to as the DRBK equations from now on.

\begin{align}\label{drbk}
\frac{d f_i}{dt}= Q_i(t), \ \ \mbox{for} \  t \in (0,+\infty)
\end{align}
where
\begin{align*}
Q_i(t) = \sum_{j=1}^{\infty} a_{i+j,j}f_{i+j} f_{j}-\sum_{j=1}^{\infty} a_{i,j}f_{i} f_{j},
\end{align*}
with initial condition
\begin{align}
f_{i}(0) = f_{i}^0, \label{dic}
\end{align}
for $i \ge 1$. Here, $ a_{i,j}=a_{j,i} \ge 0$ denotes the discrete coagulation kernel. The equations \eqref{drbk}-\eqref{dic} is known as the discrete Redner–Ben-Avraham–Kahng (RBK) equation.

Despite the fact that some authors have taken into account the relationship between the discrete and continuous models, see the survey paper \cite[p. 127]{Drake} and \cite{Aizenman, Bruno, Bouzoubaa, Ziff}, their analysis is either done at a formal level \cite{Aizenman, Drake} or their approach is limited to either a specific fragmentation model (scaling technique \cite{Ziff}) or the coagulation model (via measure-valued solutions) in \cite{Bruno}.

The primary objective of this article is to explain the relationship between the CRBK \eqref{crbk} equation and the DRBK \eqref{drbk} equation. A similar relationship was shown in \cite{DtoC1} between the classical continuous and the discrete coagulation-fragmentation equations. Furthermore, \cite{DtoC2} demonstrates the connection between Oort-Hulst-Safronov \cite{Lachowicz} and discrete Oort-Hulst-Safronov coagulation equations. Our approach is motivated by the research done in \cite{DtoC2, DtoC1}. 

For any sequence $(\phi_i)_{i \ge 1}$, by using the symmetric property of $a_{i,j}$, we get the following expression, for all $m,n \ge 1$ and $t \in (0, +\infty)$

\begin{eqnarray}\label{wdrbk1}
\frac{d}{dt}\sum_{i=m}^{n}\phi_i f_i(t)&=&-\sum_{\mathcal{T}_1(m,n)}(\phi_i-\phi_{i-j})a_{i,j}f_{i}(t) f_j(t)\nonumber\\
& & -\sum_{\mathcal{T}_2(m,n)}\phi_i a_{i,j}f_{i}(t) f_j(t)+\sum_{\mathcal{T}_3(m,n)}\phi_{i-j} a_{i,j}f_{i}(t) f_j(t),
\end{eqnarray}
where
\begin{eqnarray*}
\mathcal{T}_1(m,n)&=& \{(i,j)\in \mathbb{N}^2; m+1 \le i < n+1, 1 \le j\le i-m\},\\
\mathcal{T}_2(m,n)&=& \{(i,j)\in \mathbb{N}^2; m \le i < n+1, j\ge i-m+1\},\\
\mathcal{T}_3(m,n)&=& \{(i,j)\in \mathbb{N}^2;i \ge n+1, i-n \le j \ge i-m\}.
\end{eqnarray*}

For the proof of this formulation, we refer to \cite[Proposition 4.1]{dacosta1}. 

Since $m$ and $n$ are arbitrary natural number. Taking $m=1$ and $n \to \infty$, \eqref{wdrbk1} becomes
\begin{eqnarray}\label{wdrbk}
\frac{d}{dt}\sum_{i=1}^{\infty} \phi_i f_i(t) =\sum_{i=1}^{\infty} \sum_{j=1}^{i-1} [\phi_{i-j} - \phi_i -\phi_j] a_{i,j} f_i(t) f_j(t).
\end{eqnarray}

The contents of this article in different sections are arranged as follows.
We define several notations in the next section that will be used to introduce continuous formulas for discrete quantities. Section 3 describes some assumptions, definitions along with the main results of the article. Last section shows the convergence of weak solutions of the discrete RBK equation \eqref{drbk}-\eqref{dic} towards the weak solution of the continuous RBK equation \eqref{crbk}-\eqref{ic} for coagulation kernels $`\mathfrak{K}$' satisfying \eqref{ker}-\eqref{ker2} in space $X_{0,1}$.

\section{Some notations}
 For fix $\epsilon \in (0,1)$ and $(\varsigma, \varrho, t)\in \mathbb{R}^2_+\times (0,+\infty)$, let us introduce the following notations, depending on $\epsilon$ to write the equation \eqref{wdrbk} as the weak formulation of the modified CRBK equation, as
\begin{eqnarray}\label{d1}
\mathfrak{f}_{\epsilon}(\varrho, t) :=\sum_{i=1}^{\infty} f_i(t) \Xi_i^{\epsilon}(\varrho),
\end{eqnarray}

\begin{eqnarray}\label{d2}
\mathfrak{K}_{\epsilon}(\varsigma, \varrho) :=\sum_{i=1}^{\infty} \sum_{j=1}^{\infty} \frac{a_{i,j}}{\epsilon} \Xi_i^{\epsilon}(\varsigma)\Xi_j^{\epsilon}(\varrho),
\end{eqnarray}
and
\begin{eqnarray}\label{d3}
\Lambda_i^{\epsilon} :=[(i-1/2)\epsilon, (i+1/2)\epsilon), \ \ \ \ i\ge 1,
\end{eqnarray}
where $\Xi_i^{\epsilon} = \mathds{1}_{\Lambda_i^{\epsilon}}$ and $\mathds{1}_{\Lambda_i^{\epsilon}}$ is the indicator function defined on $\Lambda_i^{\epsilon}$ for all $ i \ge 1$. Following that, we approximate $\epsilon$-step function of $\psi \in \mathcal{D}(\mathbb{R}_+)$ and $\varsigma \in \mathbb{R}_+$ to define a sequence $(\psi_{\epsilon})$ as
\begin{eqnarray}\label{d4}
\psi_{\epsilon}(\varsigma):= \sum_{i=1}^{\infty} \psi_{i}^{\epsilon} \Xi_i^{\epsilon}(\varsigma) \ \ \ \mbox{with} \ \ \ \psi_{i}^{\epsilon}=\frac{1}{\epsilon} \int_{\Lambda_i^{\epsilon}} \psi(\varrho) d \varrho.
\end{eqnarray}
Finally, for every $\epsilon$-step measurable function $c$ from $\mathbb{R}_+$ to $\mathbb{R}$ satisfying the form
$$c(\varsigma):=\sum_{i=1}^{\infty} c_{i} \Xi_i^{\epsilon}(\varsigma), \ \ \ c_i \in \mathbb{R}.$$
The approximation of $c(\varsigma-\varrho)$ is denoted by $T_{\epsilon}(c)$ which is defined as
\begin{eqnarray}\label{d5}
T_{\epsilon}(c)(\varsigma, \varrho) := \sum_{i=1}^{\infty} \sum_{j=1}^{i-1} c_{i-j} \Xi_i^{\epsilon}(\varsigma) \Xi_j^{\epsilon}(\varrho), \ \ \ (\varsigma,\varrho) \in \mathbb{R}^2_+.
\end{eqnarray}

 By using above notations, we write an alternative form of \eqref{wdrbk} in terms of new functions $\mathfrak{f}_{\epsilon}$, $\mathfrak{K}_{\epsilon}$ and $\psi_{\epsilon}$ as
\begin{eqnarray}\label{d7}
\frac{d}{dt}\bigg( \int_0^{\infty} \mathfrak{f}_{\epsilon} \psi_{\epsilon} d\varsigma \bigg) &=& \int_0^{\infty} \int_0^{\ell_{\epsilon}(\varsigma)} [T_{\epsilon}(\psi_{\epsilon})(\varsigma, \varrho)- \psi_{\epsilon}(\varsigma)- \psi_{\epsilon}(\varrho)]\nonumber \\
& & \hspace{2cm}\mathfrak{K}_{\epsilon}(\varsigma, \varrho) \mathfrak{f}_{\epsilon}(\varsigma) \mathfrak{f}_{\epsilon}(\varrho) d\varrho d\varsigma,
\end{eqnarray}
where
\begin{eqnarray}\label{d6}
\ell_{\epsilon}(\varsigma):= \bigg( \bigg[ \frac{\varsigma}{\epsilon} +\frac{1}{2}\bigg] +\frac{1}{2}\bigg) \epsilon,
\end{eqnarray}

and $[x]$ represents the integer part of the real number $x$.

Assuming the convergence of $(\mathfrak{f}_{\epsilon})$ towards a function $\mathfrak{f}$ and if $(\mathfrak{K}_{\epsilon})$ towards some $\mathfrak{K}$, it is possible to formally pass to the limit in \eqref{d7}. The function $\mathfrak{f}$ therefore satisfies the weak formulation of the continuous RBK equation \eqref{crbk} which can be read as

 \begin{eqnarray*}
 \frac{d}{dt}\int_0^{\infty} \psi \mathfrak{f} d\varsigma=\int_0^{\infty} \int_0^{\varsigma} [\psi(\varsigma-\varrho)-\psi(\varsigma)-\psi(\varrho)] \mathfrak{K}(\varsigma,\varrho) \mathfrak{f}(\varsigma,t) \mathfrak{f}(\varrho,t) d\varrho d\varsigma,
\end{eqnarray*}
for every $t\in(0,T], T\in (0, \infty)$ and $\psi \in L^{\infty}(0,\infty)$.

\section{ASSUMPTIONS, DEFINITIONS AND MAIN RESULT}
We make the following assumptions on $\mathfrak{K}$ which is non-negative and symmetric:
\begin{eqnarray}\label{ker}
0 \le \mathfrak{K}(\varrho, \varsigma)= \mathfrak{K}(\varsigma, \varrho)=r(\varsigma) r(\varrho)+\alpha(\varsigma, \varrho), \hspace{1.5cm}(\varsigma, \varrho)\in \mathbb{R}_+^2
\end{eqnarray}
where $r$ and $\alpha$ are non-negative functions satisfying
\begin{eqnarray}\label{ker1}
\left\{
  \begin{array}{ll}
    r \in \mathcal{C}(\mathbb{R}_+; \mathbb{R}_+),\ \ \ \ \ \ \ \alpha \in \mathcal{C}(\mathbb{R}_+^2; \mathbb{R}_+), \\
    0\le \alpha(\varsigma, \varrho)= \alpha(\varrho, \varsigma)\le Ar(\varsigma)r(\varrho),\ \ \ \ \ (\varsigma, \varrho) \in [1, +\infty)^2,
  \end{array}
\right.
\end{eqnarray}
for some positive real number $A$. Additionally, we assume that $\mathfrak{K}$ is strictly subquadratic, so that for each $R\ge 1$,
\begin{eqnarray}\label{ker2}
\omega_R(\varrho)=\sup_{\varsigma \in [0,R]} \frac{\mathfrak{K}(\varsigma, \varrho)}{\varrho} \to 0 \ \ \ \mbox{as} \ \ \  \varrho \to \infty.
\end{eqnarray}
Finally, we suppose that the initial datum $\mathfrak{f}^{\text{in}}$ satisfies the condition
\begin{eqnarray}\label{ic1}
\mathfrak{f}^{\mbox{in}}\in X_{0,1}^+,
\end{eqnarray}
where $X_{0,1}^+$ is a positive cone of the Banach space
$$X_{0,1}=L^1(0, +\infty; (1+\varsigma) d\varsigma),$$
endowed with the norm $\|. \|_{X_{0,1}}$ defined by
$$\|x\|_{0,1}=\int_0^{\infty} (1+\varsigma) |x(\varsigma)| d \varsigma,\ \ \ \ \ x\in X_{0,1}.$$
Thus, $$X_{0,1}^+ := \{x\in X_{0,1}, x \ge 0 \}.$$

Next we, introduce the notion of mild and weak solutions, respectively, to the continuous RBK equation \eqref{crbk}-\eqref{ic}.

\begin{definition}\label{defmild}{\textbf{(Mild solution)}}
Let $T\in (0,\infty]$. Assume that $\mathfrak{f}^{\text{in}}$ satisfies \eqref{ic1} and the coagulation kernel `$\mathfrak{K}$' satisfies \eqref{ker}-\eqref{ker2}. A mild solution of \eqref{crbk}-\eqref{ic} on $[0,T)$ is a non-negative real-valued function $\mathfrak{f} : [0,T) \to X_{0,1}^+$ such that for every $t\in [0,T)$,
\begin{eqnarray}\label{space1}
\mathfrak{f}\in \mathcal{C}([0,T); L^1(0, \infty))\cap L^{\infty}(0, T; X_{0,1}),
\end{eqnarray}

\begin{eqnarray}\label{space2}
(\varrho,s) \mapsto \mathfrak{K}(\varsigma , \varrho) \mathfrak{f} (\varsigma ,s) \mathfrak{f} (\varrho ,s) \in L^1((0, \infty) \times (0,t)),
\end{eqnarray}
and for almost every $\varsigma \in \mathbb{R}_+$,
 \begin{eqnarray}\label{msol}
 \mathfrak{f}(\varsigma,t)&=&\mathfrak{f}^{\text{in}}(\varsigma)+\int_0^t \int_0^{\infty} \mathfrak{K}(\varsigma+\varrho,\varrho) \mathfrak{f}(\varsigma+\varrho,s) \mathfrak{f}(\varrho,s) d\varrho ds \nonumber\\
 & &- \int_0^t \int_0^{\infty} \mathfrak{K}(\varsigma,\varrho) \mathfrak{f}(\varsigma,s) \mathfrak{f}(\varrho,s) d\varrho ds .
\end{eqnarray}
\end{definition}

\begin{definition}{\textbf{(Weak solution)}}\label{defweak}
 Let $T\in (0,\infty]$, $\mathfrak{f}^{\text{in}} \in X_{0,1}^+$, and the coagulation kernel `$\mathfrak{K}$' satisfies \eqref{ker}-\eqref{ker2}. A real-valued function $\mathfrak{f}= \mathfrak{f}(\varsigma, t)$ is said to be a weak solution to the continuous RBK equation \eqref{crbk}-\eqref{ic} on $[0,T)$ if
\begin{eqnarray}\label{spacew1}
0 \le \mathfrak{f}\in \mathcal{C}([0,T); L^1(0, \infty))\cap L^{\infty}(0, T; X_{0,1}),
\end{eqnarray}

and satisfies
 \begin{eqnarray}\label{wsol}
 \int_0^{\infty} (\mathfrak{f}(\varsigma,t)-\mathfrak{f}^{\text{in}}(\varsigma))\phi(\varsigma) d\varsigma=\int_0^t\int_0^{\infty} \int_0^{\varsigma} \tilde{\phi}(\varsigma,\varrho) \mathfrak{K}(\varsigma,\varrho) \mathfrak{f}(\varsigma,s) \mathfrak{f}(\varrho,s) d\varrho d\varsigma ds
\end{eqnarray}
for every $t\in[0,T)$ and $\phi \in L^{\infty}(0,\infty)$, where
\begin{eqnarray}\label{tildephi}
\tilde{\phi}(\varsigma,\varrho)=\phi(\varsigma-\varrho)-\phi(\varsigma)-\phi(\varrho),~~~~(\varsigma,\varrho)\in (0,\infty)^2.
\end{eqnarray}
\end{definition}
At first glance, it may appear that being a mild solution in the sense of Definition \ref{defmild} is a stronger solution concept than the notion of a weak solution according to Definition \ref{defweak}.

Our main result is stated in Theorem \ref{distocont}, which shows that how the weak formulation of the continuous RBK equation can be approximated by discrete equations \eqref{wdrbk}. Fix $\epsilon \in (0,1)$ and for each $i, j \ge 1$, we define the discrete coagulation coefficients $a_{i,j}^{\epsilon}$ either by
\begin{eqnarray}\label{da1}
a_{i,j}^{\epsilon} =\frac{1}{\epsilon} \int_{\Lambda_i^{\epsilon} \times \Lambda_j^{\epsilon}} \mathfrak{K}(\varsigma, \varrho) d\varrho d\varsigma ,
\end{eqnarray}
or by
\begin{eqnarray}\label{da2}
a_{i,j}^{\epsilon} &=& \epsilon \mathfrak{K}(\epsilon i, \epsilon j),
\end{eqnarray}
where $\mathfrak{K}$ is a continuous function. Owing to \eqref{ker} and \eqref{ker2}, we get
\begin{eqnarray}
0 \le a_{i, j}^{\epsilon} &=& a_{j, i}^{\epsilon}, \ \ \ \ i, j \ge 1, \label{da3}\\
\lim_{j \to \infty} \frac{a_{i, j}^{\epsilon}}{j}& =& 0, \ \ \ \ \ \ \ i \ge 1. \label{da4}
\end{eqnarray}
Additionally, for the discrete RBK coagulation equation, we define the initial condition $f^{0, \epsilon}=(f_i^{0, \epsilon})_{i \ge 1}$ as
\begin{eqnarray}\label{da5}
f_i^{0, \epsilon}= \frac{1}{\epsilon}\int_{\Lambda_i^{\epsilon}} \mathfrak{f}^{\text{in}}(\varsigma) d\varsigma, \ \ \ \ \ i \ge 1.
\end{eqnarray}
We can easily verify from \eqref{ic1} that
\begin{eqnarray}\label{da6}
\epsilon^2 \sum_{i=1}^{\infty} i f_i^{0, \epsilon} \le 2 \int_0^{\infty} \varrho \mathfrak{f}^{\text{in}}(\varrho) d\varrho,
\end{eqnarray}
and
\begin{eqnarray}\label{da7}
\epsilon \sum_{i=1}^{\infty} f_i^{0, \epsilon} \le \int_0^{\infty} \mathfrak{f}^{\mbox{in}}(\varrho) d\varrho.
\end{eqnarray}
As established in \cite{dacosta1}, we consider $f^{\epsilon} := 
(f_i^{\epsilon})_{i \ge 1}$ as a solution to  the DRBK equation \eqref{drbk}-\eqref{dic} (in the sense of \cite[Definition 2.1]{dacosta1}) which satisfies
\begin{eqnarray}\label{dmass}
\sum_{i=1}^{\infty} i f_i^{\epsilon} (t) \le \sum_{i=1}^{\infty} i f_i^{0, {\epsilon}} , \ \ \ \ t\ge 0.
\end{eqnarray}
\medskip

The major attraction of the DRBK coagulation model is the following lemma. This lemma is a outcome of \eqref{wdrbk1} along with \eqref{dmass}.

\begin{lemma}\label{tail}
For every $R \ge 1$ and $\epsilon \in (0,1)$, let us consider an integer $m$ such that $m \epsilon < R \le (m+1) \epsilon$. Then the solution $f^{\epsilon} = (f_i^{\epsilon})_{i \ge 1}$ of \eqref{drbk}-\eqref{dic} (in the sense of \cite[Definition 2.1]{dacosta1}) with condition \eqref{dmass} satisfies
\begin{align}\label{eqtail}
\sum_{m+1}^{\infty} i f_i^{\epsilon}(t_2) \le \sum_{m+1}^{\infty} i f_i^{\epsilon}(t_1)\le \sum_{m+1}^{\infty} i f_i^{0, \epsilon} \ \ \forall \ t_2 \ge t_1 \ge 0.
\end{align}
\end{lemma}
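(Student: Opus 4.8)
The plan is to reduce \eqref{eqtail} to the single assertion that the map $t \mapsto \sum_{i=m+1}^{\infty} i f_i^{\epsilon}(t)$ is non-increasing on $[0,\infty)$. Granting this, the first inequality is monotonicity between $t_1$ and $t_2 \ge t_1$, while the second is the special case obtained by comparing $t_1 \ge 0$ with the initial time, since $f_i^{\epsilon}(0) = f_i^{0,\epsilon}$ gives $\sum_{i=m+1}^{\infty} i f_i^{\epsilon}(0) = \sum_{i=m+1}^{\infty} i f_i^{0,\epsilon}$.

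To establish the monotonicity I would apply the exact identity \eqref{wdrbk1} with the weight $\phi_i = i$, lower cut-off $m+1$, and a finite upper cut-off $n$. Since then $\phi_i - \phi_{i-j} = j$ and $\phi_{i-j} = i - j$, the identity becomes
\[
\frac{d}{dt}\sum_{i=m+1}^{n} i f_i^{\epsilon}
= -\sum_{\mathcal{T}_1(m+1,n)} j\, a_{i,j}^{\epsilon} f_i^{\epsilon} f_j^{\epsilon}
  -\sum_{\mathcal{T}_2(m+1,n)} i\, a_{i,j}^{\epsilon} f_i^{\epsilon} f_j^{\epsilon}
  +\sum_{\mathcal{T}_3(m+1,n)} (i-j)\, a_{i,j}^{\epsilon} f_i^{\epsilon} f_j^{\epsilon}.
\]
Because $a_{i,j}^{\epsilon}, f_i^{\epsilon} \ge 0$, the first two sums are non-positive, so after integrating in time over $[t_1,t_2]$ and discarding them I obtain
\[
\sum_{i=m+1}^{n} i f_i^{\epsilon}(t_2) - \sum_{i=m+1}^{n} i f_i^{\epsilon}(t_1)
\le \int_{t_1}^{t_2} \sum_{\mathcal{T}_3(m+1,n)} (i-j)\, a_{i,j}^{\epsilon} f_i^{\epsilon}(s) f_j^{\epsilon}(s)\, ds .
\]

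The crux is to pass to the limit $n \to \infty$. On the left, each truncated sum increases to $\sum_{i=m+1}^{\infty} i f_i^{\epsilon}(\cdot)$, which is finite and bounded uniformly in time by \eqref{dmass} and \eqref{da6}, so the limit is legitimate. The main obstacle is to show that the boundary term on the right tends to $0$: on $\mathcal{T}_3(m+1,n)$ one has $i \ge n+1$ together with $m+1 \le i-j \le n$, so the index $i$ always sits in the far tail of the first moment. I expect to control this term by splitting $\mathcal{T}_3(m+1,n)$ according to the size of the smaller cluster: where one argument stays bounded I would use the strict subquadratic decay \eqref{ker2} (equivalently \eqref{da4}) of the kernel, and where both arguments are large I would use the product bound \eqref{ker1} on $\mathfrak{K}$ together with the smallness of the tail $\sum_{i \ge n+1} i f_i^{\epsilon}(s)$, which is uniform in $s$ by \eqref{dmass}. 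Dominated convergence then forces the time integral to $0$, and passing to the limit yields $\sum_{i=m+1}^{\infty} i f_i^{\epsilon}(t_2) \le \sum_{i=m+1}^{\infty} i f_i^{\epsilon}(t_1)$, hence \eqref{eqtail}.

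As a guide to the sign bookkeeping, the same conclusion is transparent, at least formally, from \eqref{wdrbk} applied to the weight $\phi_i = i\,\mathds{1}_{\{i \ge m+1\}}$: whenever $i \ge m+1$ one has $\phi_{i-j} \le i-j$, $\phi_i = i$ and $\phi_j \ge 0$, whence $\phi_{i-j} - \phi_i - \phi_j \le -j < 0$, while the bracket vanishes when $i \le m$, so the right-hand side of \eqref{wdrbk} is non-positive term by term. The truncation argument above is precisely what is needed to turn this formal computation, which involves an unbounded weight, into a rigorous proof; consequently the verification that the truncation flux $\mathcal{T}_3(m+1,n)$ is asymptotically negligible is the only genuinely technical point.
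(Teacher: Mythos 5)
Your core argument is the same as the paper's: apply the identity \eqref{wdrbk1} with the weight $\phi_i=i$ and lower cut-off $m+1$, observe that the $\mathcal{T}_1$ and $\mathcal{T}_2$ contributions are non-positive because $\phi_i-\phi_{i-j}=j\ge 0$ and $a_{i,j}^{\epsilon},f_i^{\epsilon}\ge 0$, and read off the monotonicity of the tail; the paper simply writes this inequality directly at $n=\infty$ (where $\mathcal{T}_3$ is empty), leaning on \cite[Proposition 4.1]{dacosta1}, and stops there.

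The one place where you go beyond the paper is also the one soft spot. Your plan to kill the flux term $\sum_{\mathcal{T}_3(m+1,n)}(i-j)a_{i,j}^{\epsilon}f_i^{\epsilon}f_j^{\epsilon}$ as $n\to\infty$ is not yet closed as sketched: on $\mathcal{T}_3$ the weight $i-j$ can be as large as $n$, and \eqref{dmass} only bounds the full first moment uniformly in $s$ --- it does not give uniform-in-$s$ smallness of the tail $\sum_{i\ge n+1} i f_i^{\epsilon}(s)$, and even pointwise smallness of that tail does not absorb an extra factor of order $n$ without something like a second moment or additional integrability built into the notion of solution. So either you must invoke the $n=\infty$ form of the identity as established in \cite[Proposition 4.1]{dacosta1} for solutions in the sense of \cite[Definition 2.1]{dacosta1} (which is what the paper implicitly does), or you need a genuinely quantitative estimate on $\mathcal{T}_3$ that your splitting does not yet supply. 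Everything else in your reduction (monotonicity implies both inequalities, $f_i^{\epsilon}(0)=f_i^{0,\epsilon}$) is correct and matches the paper.
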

\begin{proof}
Let $\epsilon \in (0,1)$. It follows from \eqref{wdrbk1} that
\begin{align*}
\sum_{m+1}^{\infty} i f_i^{\epsilon}(t_2) \le \sum_{m+1}^{\infty} i f_i^{\epsilon}(t_1)-\int_{t_1}^{t_2}\sum_{i=m+2}^{\infty} \sum_{j=1}^{i-(m+1)} j a_{i,j}^{\epsilon} f_i^{\epsilon}(s) f_j^{\epsilon}(s) ds.
\end{align*}

Since $a_{i,j}^{\epsilon}$ and $f_i^{\epsilon}$ are non-negative, we get
$$\sum_{m+1}^{\infty} i f_i^{\epsilon}(t_2) \le \sum_{m+1}^{\infty} i f_i^{\epsilon}(t_1) \ \ \forall \ t_2 \ge t_1 \ge 0.$$

This completes proof of Lemma \ref{tail}.

\end{proof}
\medskip

In the same way as in Section 1, we introduce continuous formulas for the discrete quantities $f^{\epsilon}$, $a_{i,j}^{\epsilon}$ and define new functions $\mathfrak{f}_{\epsilon}$ and $\mathfrak{K}_{\epsilon}$ (with $f_i^{\epsilon}$ and $a_{i,j}^{\epsilon}$ instead of $f_i$ and $a_{i,j}$ respectively) and for $(\varsigma, \varrho, t) \in \mathbb{R}_+^2 \times (0, +\infty)$, we set
\begin{eqnarray}
\mathfrak{f}_{\epsilon}(\varrho, t) &=&\sum_{i=1}^{\infty} f_i^{\epsilon}(t) \Xi_i^{\epsilon}(\varrho), \label{ca1}\\
\mathfrak{K}_{\epsilon}(\varsigma, \varrho) &=&\sum_{i=1}^{\infty} \sum_{j=1}^{\infty} \frac{a_{i,j}^{\epsilon}}{\epsilon} \Xi_i^{\epsilon}(\varsigma)\Xi_j^{\epsilon}(\varrho), \label{ca2}
\end{eqnarray}
Here, $\mathfrak{K}_{\epsilon}$ converges a.e. towards $\mathfrak{K}$ and satisfing \eqref{ker}-\eqref{ker2}.

With these assumptions, now we are in situation to state our main result:

\begin{theorem}\label{distocont}
Assume that the coagulation kernel $\mathfrak{K}$ satisfies \eqref{ker}-\eqref{ker2} and the initial condition $\mathfrak{f}^{\mbox{in}} \in X_{0,1}^+$. Let $f^{\epsilon}=(f_i^{\epsilon})_{i \ge 1}$ be a solution to the discrete RBK equation \eqref{drbk}-\eqref{dic} with the coagulation kernel $a_{i, j}^{\epsilon}$ defined by \eqref{da1} or \eqref{da2} and the initial data $f^{0, \epsilon}$ specified by \eqref{da5} such that \eqref{dmass} holds. Let us consider $\mathfrak{f}_{\epsilon}$ be a function as described in \eqref{ca1}. Then, given an initial data set $\mathfrak{f}^{\mbox{in}}$ and a subsequence $(\mathfrak{f}_{\epsilon_n})$ of $\mathfrak{f}_{\epsilon}$, there is a weak solution $\mathfrak{f}$ to the continuous RBK equation \eqref{crbk}-\eqref{ic} in the sense of Definition \ref{defweak} such that
$$ \mathfrak{f}_{\epsilon_n} \to \mathfrak{f} \ \ \ \ \mbox{in} \ \  \mathcal{C}([0,T]; w-L^1(\mathbb{R}_+)),$$
for every $T \in (0, +\infty)$.
\end{theorem}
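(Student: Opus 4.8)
The plan is to follow the standard compactness scheme for passing from discrete to continuous coagulation models, as developed in \cite{DtoC1, DtoC2}, carried out in three stages: a priori bounds with weak $L^1$-compactness, extraction of a subsequence convergent in $\mathcal{C}([0,T]; w-L^1)$ via a weak Arzelà–Ascoli argument, and passage to the limit in the weak formulation \eqref{d7}.

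First I would assemble the uniform estimates. The mass-control inequality \eqref{dmass} together with the initial bounds \eqref{da6}–\eqref{da7} shows that $(\mathfrak{f}_{\epsilon})$ is bounded in $L^{\infty}(0,T; X_{0,1})$ uniformly in $\epsilon$. Weak $L^1$-compactness at each fixed time then requires, through the Dunford–Pettis theorem, both tightness and uniform integrability. Tightness is furnished directly by Lemma \ref{tail}: the tail bound \eqref{eqtail} controls $\sum_{m+1}^{\infty} i f_i^{\epsilon}(t)$ uniformly by its initial value, preventing escape of mass to infinity. Uniform integrability I would obtain by a de la Vallée–Poussin argument, choosing a convex weight $\Phi$ with $\Phi(r)/r \to \infty$ for which $\int_0^{\infty}\Phi(\mathfrak{f}^{\text{in}})\,d\varsigma < \infty$ and propagating the bound in time using the sign structure of the collision operator, which never increases such moments.

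Second, to upgrade pointwise-in-time compactness to convergence in $\mathcal{C}([0,T]; w-L^1(\mathbb{R}_+))$, I would establish equicontinuity in time. Testing \eqref{d7} against a fixed $\psi \in \mathcal{D}(\mathbb{R}_+)$ and estimating the right-hand side with the strictly subquadratic bound \eqref{ker2} yields a uniform-in-$\epsilon$ bound on $\left|\tfrac{d}{dt}\int_0^{\infty} \mathfrak{f}_{\epsilon}\psi_{\epsilon}\,d\varsigma\right|$ on intervals where the first moment stays bounded. Hence $t \mapsto \int_0^{\infty}\mathfrak{f}_{\epsilon}\psi\,d\varsigma$ is equicontinuous, and the weak-topology version of Arzelà–Ascoli extracts a subsequence $(\mathfrak{f}_{\epsilon_n})$ converging in $\mathcal{C}([0,T]; w-L^1)$ to a limit $\mathfrak{f}$ enjoying the regularity \eqref{spacew1}.

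Third, and finally, comes the passage to the limit in the time-integrated form of \eqref{d7}. The weight $T_{\epsilon}(\psi_{\epsilon})(\varsigma,\varrho) - \psi_{\epsilon}(\varsigma) - \psi_{\epsilon}(\varrho)$ converges to $\psi(\varsigma-\varrho)-\psi(\varsigma)-\psi(\varrho)$, the kernel $\mathfrak{K}_{\epsilon}$ converges a.e.\ to $\mathfrak{K}$ by construction, and $\ell_{\epsilon}(\varsigma)\to\varsigma$. \textbf{The genuine obstacle is to pass to the limit in the bilinear product} $\mathfrak{f}_{\epsilon}(\varsigma)\mathfrak{f}_{\epsilon}(\varrho)$, since weak $L^1$-convergence does not transfer to products. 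The resolution I would use exploits \eqref{ker2}: split the integration region into a bounded part, where the multiplier $\tilde{\phi}\,\mathfrak{K}_{\epsilon}$ is bounded and a.e.-convergent, so that the product of the two weakly convergent factors passes to the limit by approximating the multiplier by finite sums of separable functions $a(\varsigma)b(\varrho)$—for which weak convergence applies in each variable separately—with the approximation error controlled by the uniform integrability from the first step; and an unbounded part, where the strictly subquadratic estimate \eqref{ker2} combined with the uniform tail control \eqref{eqtail} renders the remainder arbitrarily small. Letting $\epsilon_n\to 0$ and then sending the truncation level to infinity shows that the right-hand side of \eqref{d7} converges to that of \eqref{wsol}; since the left-hand side converges by construction, $\mathfrak{f}$ satisfies \eqref{wsol} for every $\psi \in \mathcal{D}(\mathbb{R}_+)$, and a density argument extends the identity to all $\phi \in L^{\infty}(0,\infty)$, identifying $\mathfrak{f}$ as a weak solution in the sense of Definition \ref{defweak}.
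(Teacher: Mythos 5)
Your overall architecture---uniform moment bounds, Dunford--Pettis plus a weak Arzel\`a--Ascoli extraction in $\mathcal{C}([0,T];w\text{-}L^1)$, then passage to the limit in the weak form with a bounded/unbounded splitting governed by \eqref{ker2}---is exactly the scheme of the paper, and your treatment of the bilinear term (separable approximation of the multiplier on the bounded region, tail control outside) is an acceptable substitute for the paper's use of the weak--strong convergence lemma (Lemma \ref{conv2}). However, there is one genuine gap, and it sits at the most technical point of the argument: your justification of uniform integrability. You assert that, after choosing a de la Vall\'ee Poussin weight $\Phi$ with $\int_0^{\infty}\Phi(\mathfrak{f}^{\text{in}})\,d\varsigma<\infty$, the bound propagates in time ``using the sign structure of the collision operator, which never increases such moments.'' That is false for the RBK operator: the gain term $\sum_j a_{i+j,j}^{\epsilon}f_{i+j}^{\epsilon}f_j^{\epsilon}$ transfers mass toward small sizes and can strictly increase $f_i^{\epsilon}$, hence strictly increase $\sum_i \epsilon\,\sigma(f_i^{\epsilon})$; no sign cancellation is available here.

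What is actually required is the content of Lemma \ref{dtcint1}: one differentiates $\sum_{i\le m+1}\epsilon\,\sigma(f_i^{\epsilon})$, splits the gain contribution into three regions ($J_{1,\epsilon}$, $J_{2,\epsilon}$, $J_{3,\epsilon}$), controls each using the convexity inequalities \eqref{5}--\eqref{6}, the local kernel bound \eqref{bound_a}, the subquadraticity \eqref{ker2} and the factorized structure \eqref{ker}--\eqref{ker1} together with the tail monotonicity of Lemma \ref{tail}, and then closes the estimate by Gronwall's inequality. The resulting bound $\int_0^{R}\sigma(\mathfrak{f}_{\epsilon}(\cdot,t))\,d\varsigma\le\mathcal{L}(R,T)$ is only local in $\varsigma$ and grows exponentially in $T$; it is not a globally monotone quantity. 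Without this step your uniform integrability claim, and with it the Dunford--Pettis compactness on which the whole extraction rests, is unsupported. A minor mis-attribution as well: tightness (no escape of mass to infinity) comes from the uniform first-moment bound \eqref{3}, a consequence of \eqref{dmass} and \eqref{da6}; Lemma \ref{tail} is used inside the Gronwall estimate for $J_{3,\epsilon}$, not for tightness.
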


\medskip

\section{\textbf{PROOF OF THEOREM \ref{distocont}}}
Let us start this section by considering the DRBK equation \eqref{drbk}-\eqref{dic} with coagulation coefficient $a_{i,j}^{\epsilon}$ and initial value $f^{0, \epsilon}$ defined by \eqref{da1} or \eqref{da2} and \eqref{da5}, respectively. The proof mainly relies on a weak $L^1$-compactness argument. In order to proceed with the analysis, it is necessary to obtain consistent estimates with respect to $\epsilon$ for the function $\mathfrak{f}_{\epsilon}$ as defined in \eqref{ca1}. These estimates guarantee that $(\mathfrak{f}_{\epsilon})$ is contained in a weakly compact set of $L^1$. Next, we proceed to evaluate the limit as $\epsilon \to 0$.

First we recall that
\begin{eqnarray}\label{1}
\|\mathfrak{f}^{\mbox{in}}\|_{0,1} =\int_0^{\infty} (1+\varsigma) \mathfrak{f}^{\mbox{in}}(\varsigma) d\varsigma,
\end{eqnarray}
and it follows from \eqref{ca1} that for each $t \ge 0$
\begin{eqnarray}
 \int_0^{\infty} \mathfrak{f}_{\epsilon}(\varsigma, t) d\varsigma &=& \epsilon \sum_{i=1}^{\infty} f_i^{\epsilon} (t), \label{2.1}\\
  \int_0^{\infty} \varsigma \mathfrak{f}_{\epsilon}(\varsigma, t) d\varsigma &=& \epsilon^2 \sum_{i=1}^{\infty} i f_i^{\epsilon} (t) \label{2.2}.
\end{eqnarray}

\medskip

\subsection{A priori estimates}

\begin{lemma}\label{me1}
Let $t >0$ and $\epsilon \in (0,1)$, then following inequalities hold tue:
\begin{eqnarray}\label{3}
\int_0^{\infty} \varsigma \mathfrak{f}_{\epsilon}(\varsigma, t) d\varsigma \le 2\|\mathfrak{f}^{\mbox{in}}\|_{0,1},
                                                                                                                                                                                                                                                                                                                                                                                                                                                                                                                                                                                                                                                                                                                                                                                                                                                                     \end{eqnarray} 
and 
\begin{eqnarray}\label{4}
\int_0^{\infty} \mathfrak{f}_{\epsilon}(\varsigma, t) d\varsigma \le \|\mathfrak{f}^{\mbox{in}}\|_{0,1}.
\end{eqnarray}
\end{lemma}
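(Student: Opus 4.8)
The plan is to reduce both inequalities to the discrete bounds \eqref{da6} and \eqref{da7} on the initial data, using the continuous–discrete dictionary \eqref{2.1}--\eqref{2.2} together with the monotonicity of the discrete mass and number along the flow.

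First I would establish the first-moment bound \eqref{3}. By \eqref{2.2} the left-hand side equals $\epsilon^2 \sum_{i=1}^{\infty} i f_i^{\epsilon}(t)$. The mass-decay property \eqref{dmass} gives $\epsilon^2 \sum_{i=1}^{\infty} i f_i^{\epsilon}(t) \le \epsilon^2 \sum_{i=1}^{\infty} i f_i^{0,\epsilon}$, and \eqref{da6} bounds this by $2\int_0^{\infty} \varrho\, \mathfrak{f}^{\mbox{in}}(\varrho)\, d\varrho$. Since $\mathfrak{f}^{\mbox{in}} \ge 0$, the latter integral is at most $\int_0^{\infty}(1+\varrho)\mathfrak{f}^{\mbox{in}}(\varrho)\,d\varrho = \|\mathfrak{f}^{\mbox{in}}\|_{0,1}$ by \eqref{1}, which yields \eqref{3}.

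For the zeroth-moment bound \eqref{4}, by \eqref{2.1} the left-hand side is $\epsilon \sum_{i=1}^{\infty} f_i^{\epsilon}(t)$, so it suffices to show that $t \mapsto \sum_{i=1}^{\infty} f_i^{\epsilon}(t)$ is non-increasing and then invoke \eqref{da7}. I would obtain this monotonicity by inserting the constant test sequence $\phi_i \equiv 1$ into the weak identity \eqref{wdrbk}: the bracket $\phi_{i-j}-\phi_i-\phi_j$ collapses to $-1$, so that
\begin{equation*}
\frac{d}{dt}\sum_{i=1}^{\infty} f_i^{\epsilon}(t) = -\sum_{i=1}^{\infty}\sum_{j=1}^{i-1} a_{i,j}^{\epsilon} f_i^{\epsilon}(t) f_j^{\epsilon}(t) \le 0,
\end{equation*}
by non-negativity of $a_{i,j}^{\epsilon}$ and $f_i^{\epsilon}$. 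Integrating in time gives $\sum_i f_i^{\epsilon}(t) \le \sum_i f_i^{0,\epsilon}$; multiplying by $\epsilon$ and applying \eqref{da7} followed by \eqref{1} (again using $\mathfrak{f}^{\mbox{in}} \ge 0$) produces \eqref{4}.

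The only delicate point is the rigorous justification of the choice $\phi_i \equiv 1$ in \eqref{wdrbk}, since that identity arises by letting $n \to \infty$ in the truncated formula \eqref{wdrbk1}. I would make this precise at the truncated level: taking $\phi_i = 1$ and $m = 1$ in \eqref{wdrbk1} annihilates the $\mathcal{T}_1$ sum, leaves the manifestly non-positive $\mathcal{T}_2$ contribution, and reduces the boundary term to the $\mathcal{T}_3(1,n)$ sum, which I would argue tends to zero as $n \to \infty$ using \eqref{da4} together with the uniform first-moment control from \eqref{dmass} and \eqref{da6}. The finiteness of every series involved is guaranteed throughout by \eqref{da6}--\eqref{da7} and \eqref{dmass}, which bound the number and the first moment uniformly in $t$ and $\epsilon$, so that no convergence issue obstructs passing to the limit.
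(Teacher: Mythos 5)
Your proposal is correct and follows essentially the same route as the paper: \eqref{3} via \eqref{2.2}, \eqref{dmass}, \eqref{da6}, and \eqref{4} by taking $\phi_i \equiv 1$ in \eqref{wdrbk}, integrating the resulting non-positive derivative, and invoking \eqref{da7} with \eqref{2.1}. Your additional care in justifying the $n \to \infty$ passage in \eqref{wdrbk1} for the constant test sequence is a welcome refinement that the paper leaves implicit, but it does not alter the argument.
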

\begin{proof}
For proving \eqref{3}, it is a straightforward consequence of \eqref{da6}, \eqref{dmass} and \eqref{2.2},
\begin{eqnarray*}
\int_0^{\infty} \varsigma \mathfrak{f}_{\epsilon}(\varsigma, t) d\varsigma &=& \epsilon^2 \sum_{i=1}^{\infty} i f_i^{\epsilon}(t)\\
& \le & \epsilon^2 \sum_{i=1}^{\infty} i f_i^{0, \epsilon}(t)\\
& \le & 2\int_0^{\infty} \varsigma \mathfrak{f}^{\text{in}}(\varsigma, t) d\varsigma \le 2\|\mathfrak{f}^{\text{in}}\|_{0,1}.
\end{eqnarray*}

We now proceed to demonstrate \eqref{4}. By taking $\phi_i =1$ in \eqref{wdrbk}, the non-negativity of $a_{i,j}^{\epsilon}$ and $f^{\epsilon}$ implies that
\begin{eqnarray*}
\frac{d}{dt}\sum_{i=1}^{\infty} \epsilon f_i^{\epsilon}(t) =- \epsilon \sum_{i=1}^{\infty} \sum_{j=1}^{i-1}  a_{i,j}^{\epsilon} f_i^{\epsilon} f_j^{\epsilon} \le 0.
\end{eqnarray*}
By integrating both side with respect to $t$, we obtain
\begin{align*}
\sum_{i=1}^{\infty} \epsilon f_i^{\epsilon}(t) \le \sum_{i=1}^{\infty} \epsilon f_i^{0,\epsilon}.
\end{align*}
Owing to \eqref{da7} and \eqref{2.1}, we obtain for each $t \ge 0$,
\begin{align*}
\int_0^{\infty} \mathfrak{f}_{\epsilon}(\varsigma,t)=  \sum_{i=1}^{\infty} \epsilon f_i^{\epsilon}(t) 
\le \sum_{i=1}^{\infty} \epsilon f_i^{0, \epsilon} 
\le & \int_0^{\infty} \mathfrak{f}^{\text{in}}(\varsigma) d \varsigma \le \|\mathfrak{f}^{\text{in}}\|_{0,1}.
\end{align*}
This completes the proof of Lemma \ref{me1}.
\end{proof}

\medskip

Let us now state and prove following sequence of lemmas, which are required to show the uniform continuity of solutions by using refined version of the de la Vall\'{e}e Poussin theorem, see \cite[Theorem 8]{Laurencot3}.

\begin{lemma}\label{convex}
Consider a convex function $\sigma \in \mathcal{C}^1 ([0, \infty))$ that satisfies $\sigma(\varsigma)\ge 0, \forall \varsigma \in \mathbb{R}_+$ $\sigma(0)=0$, $ \sigma'(0)=1$ and $\sigma'$ is concave. Then for all $\varsigma, \varrho \ge 0$, 
\begin{eqnarray}\label{5}
\sigma(\varsigma) \le \varsigma \sigma'(\varsigma) \le 2 \sigma(\varsigma),
\end{eqnarray}
and
\begin{eqnarray}\label{6}
\varrho \sigma'(\varsigma) \le \sigma(\varrho)+\sigma(\varsigma).
\end{eqnarray}
\end{lemma}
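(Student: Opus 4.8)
The plan is to prove the two inequalities \eqref{5} and \eqref{6} purely from the four hypotheses on $\sigma$: convexity, $\sigma(0)=0$, $\sigma'(0)=1$, and concavity of $\sigma'$. These are elementary consequences of the calculus of convex functions, so I would organize the argument around the Fundamental Theorem of Calculus applied to $\sigma'$ and its monotonicity/concavity properties.

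For the left inequality in \eqref{5}, namely $\sigma(\varsigma)\le \varsigma\sigma'(\varsigma)$, I would write $\sigma(\varsigma)=\int_0^\varsigma \sigma'(s)\,ds$ and use that $\sigma'$ is nondecreasing (since $\sigma$ is convex, so $\sigma''\ge 0$). Because $\sigma'(s)\le \sigma'(\varsigma)$ for all $s\in[0,\varsigma]$, integrating gives $\sigma(\varsigma)\le \varsigma\sigma'(\varsigma)$. For the right inequality $\varsigma\sigma'(\varsigma)\le 2\sigma(\varsigma)$, the concavity of $\sigma'$ enters: concavity of $\sigma'$ together with $\sigma'(0)=1\ge 0$ forces $\sigma'$ to be subadditive-like, and more usefully it gives the bound $\sigma'(s)\ge \tfrac{1}{2}\bigl(\sigma'(0)+\sigma'(\varsigma)\bigr)$ at the midpoint and, after integrating, a lower bound on $\sigma(\varsigma)$. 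Concretely, I would exploit that for a concave $\sigma'$ on $[0,\varsigma]$ one has $\int_0^\varsigma \sigma'(s)\,ds \ge \tfrac{\varsigma}{2}\bigl(\sigma'(0)+\sigma'(\varsigma)\bigr)$ (the trapezoidal underestimate for concave functions). Since $\sigma'(0)=1\ge 0$, this yields $\sigma(\varsigma)\ge \tfrac{\varsigma}{2}\sigma'(\varsigma)$, which rearranges to exactly $\varsigma\sigma'(\varsigma)\le 2\sigma(\varsigma)$.

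For \eqref{6}, the inequality $\varrho\,\sigma'(\varsigma)\le \sigma(\varrho)+\sigma(\varsigma)$, I would split into the two cases $\varrho\le\varsigma$ and $\varrho>\varsigma$. When $\varrho\le\varsigma$, monotonicity of $\sigma'$ gives $\varrho\,\sigma'(\varsigma)\ge \varrho\,\sigma'(\varrho)\ge \sigma(\varrho)$ in the wrong direction, so instead I would bound $\sigma'(\varsigma)$ by a lower-order quantity; the cleaner route is to use concavity of $\sigma'$ to get $\sigma'(\varsigma)\le \sigma'(\varrho)+\bigl(\sigma'(\varsigma)-\sigma'(\varrho)\bigr)$ and control the increment. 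A more robust approach is to invoke the Young-type inequality for the convex function $\sigma$: by convexity $\sigma(\varrho)\ge \sigma(\varsigma)+\sigma'(\varsigma)(\varrho-\varsigma)$, which rearranges to $\varrho\,\sigma'(\varsigma)\le \sigma(\varrho)-\sigma(\varsigma)+\varsigma\sigma'(\varsigma)$, and then applying the already-proved right half of \eqref{5} gives $\varrho\,\sigma'(\varsigma)\le \sigma(\varrho)-\sigma(\varsigma)+2\sigma(\varsigma)=\sigma(\varrho)+\sigma(\varsigma)$. This is clean and reuses \eqref{5}.

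The main obstacle I anticipate is the right-hand inequality $\varsigma\sigma'(\varsigma)\le 2\sigma(\varsigma)$, since this is the only place where the concavity of $\sigma'$ (rather than mere convexity of $\sigma$) is genuinely needed, and one must set up the correct integral comparison for a concave integrand. Once \eqref{5} is established, \eqref{6} follows quickly from the convexity (tangent-line) inequality as indicated above, so the bulk of the care goes into the trapezoidal estimate for the concave function $\sigma'$. I would double-check the boundary normalizations $\sigma(0)=0$ and $\sigma'(0)=1$ are used correctly, as they pin down the constants and make the factor $2$ sharp.
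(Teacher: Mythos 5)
Your proof is correct: the monotonicity bound $\sigma'(s)\le\sigma'(\varsigma)$ on $[0,\varsigma]$ gives the left half of \eqref{5}, the chord-below-graph (trapezoidal) estimate for the concave $\sigma'$ together with $\sigma'(0)=1\ge 0$ gives the right half, and the tangent-line inequality for the convex $\sigma$ combined with the right half of \eqref{5} yields \eqref{6}. The paper itself does not prove the lemma but defers to \cite[Proposition 14]{Laurencot3}, and your argument coincides with the standard proof given there; the only cosmetic remark is that the preliminary case-splitting discussion in your treatment of \eqref{6} is superfluous once you settle on the tangent-line route.
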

For the proof of Lemma \ref{convex}, we refer \cite[Proposition 14]{Laurencot3}.

\begin{lemma}\label{dtcint1}
Let us assume a function $\sigma$ as defined in Lemma \ref{convex} and
\begin{eqnarray}\label{7}
\mathfrak{C}_{\sigma}=\int_0^{\infty} \sigma(\mathfrak{f}^{\mbox{in}}) (\varsigma) d\varsigma <+\infty.
\end{eqnarray}
Let $R \ge 1$. Then for each $t\in [0, T]$, $T \in (0, +\infty)$ and $\epsilon \in (0,1)$, there exists a constant $\mathcal{L}(R,T) >0$ such that
\begin{align}\label{8}
\int_0^{R} \sigma(\mathfrak{f}_{\epsilon}(\varsigma, t)) d\varsigma \le \mathcal{L}(R, T).
\end{align}
\end{lemma}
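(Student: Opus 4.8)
The plan is to dominate the integral by a truncated discrete sum and then close a Gronwall inequality in time. Fix $R\ge1$ and, exactly as in Lemma \ref{tail}, choose the integer $m$ with $m\epsilon<R\le(m+1)\epsilon$. Since $\mathfrak{f}_{\epsilon}$ is constant on each cell $\Lambda_i^{\epsilon}$ and $\Xi_i^{\epsilon}\equiv0$ on $[0,R]$ once $i\ge m+2$, the non-negativity and monotonicity of $\sigma$ give
\[
\int_0^{R}\sigma(\mathfrak{f}_{\epsilon}(\varsigma,t))\,d\varsigma\le \epsilon\sum_{i=1}^{m+1}\sigma(f_i^{\epsilon}(t))=:V_{\epsilon}(t).
\]
It therefore suffices to bound $V_{\epsilon}(t)$ uniformly for $\epsilon\in(0,1)$ and $t\in[0,T]$. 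The initial value is already under control: since $f_i^{0,\epsilon}=\frac1\epsilon\int_{\Lambda_i^{\epsilon}}\mathfrak{f}^{\text{in}}$, convexity of $\sigma$ (Jensen on each cell) yields $\epsilon\,\sigma(f_i^{0,\epsilon})\le\int_{\Lambda_i^{\epsilon}}\sigma(\mathfrak{f}^{\text{in}})$, whence $V_{\epsilon}(0)\le\mathfrak{C}_{\sigma}$ by \eqref{7}.

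Next I would differentiate $V_{\epsilon}$ along the DRBK flow \eqref{drbk} with coefficients $a_{i,j}^{\epsilon}$. Because $\sigma$ is convex with $\sigma'(0)=1$, one has $\sigma'\ge1>0$, so the loss contribution is non-positive and may be discarded, leaving
\[
\frac{dV_{\epsilon}}{dt}\le \epsilon\sum_{i=1}^{m+1}\sigma'(f_i^{\epsilon})\sum_{j\ge1}a_{i+j,j}^{\epsilon}\,f_{i+j}^{\epsilon}f_j^{\epsilon}.
\]
Applying the convexity inequality \eqref{6} in the form $f_{i+j}^{\epsilon}\,\sigma'(f_i^{\epsilon})\le\sigma(f_{i+j}^{\epsilon})+\sigma(f_i^{\epsilon})$ splits the right-hand side into
\[
I_1=\epsilon\sum_{i=1}^{m+1}\sigma(f_i^{\epsilon})\sum_{j\ge1}a_{i+j,j}^{\epsilon}f_j^{\epsilon},\qquad I_2=\epsilon\sum_{i=1}^{m+1}\sum_{j\ge1}a_{i+j,j}^{\epsilon}\,\sigma(f_{i+j}^{\epsilon})\,f_j^{\epsilon}.
\]

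For $I_1$ the factor $\sigma(f_i^{\epsilon})$ already carries the small index $i\le m+1$, so it remains to bound $\sum_{j}a_{i+j,j}^{\epsilon}f_j^{\epsilon}$. Writing $a_{i+j,j}^{\epsilon}$ through \eqref{da1}/\eqref{da2} and splitting the $j$-sum, I would use the subquadratic bound \eqref{ker2} (with bounded first argument) on the part where $j\epsilon$ is bounded and the product decomposition \eqref{ker}--\eqref{ker1} on the complementary part, matching each against the uniform zeroth- and first-moment bounds \eqref{3}--\eqref{4} of Lemma \ref{me1}; this gives $I_1\le C_2(R,T)\,V_{\epsilon}$, a term of Gronwall type. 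The genuinely delicate term is $I_2$, where $\sigma$ is evaluated at the large clusters $f_{i+j}^{\epsilon}$: this reflects the characteristic feature of the RBK dynamics that a cluster of size $\le R$ is produced by the collision of two arbitrarily large clusters whose sizes differ by at most $R$. Here I would split the inner sum according to $f_{i+j}^{\epsilon}\le1$ or $f_{i+j}^{\epsilon}>1$; on the first set the near-linear behaviour of $\sigma$ at the origin, $\sigma(r)\le\sigma'(1)\,r$ (from \eqref{5} and monotonicity of $\sigma'$), reduces $I_2$ to a kernel-weighted mass that is tamed by the decay \eqref{ker2} together with the uniform tail control of Lemma \ref{tail}, while the set $\{f_{i+j}^{\epsilon}>1\}$ is absorbed using the first-moment bound, producing $I_2\le C_1(R,T)$.

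Combining the two estimates gives $\frac{dV_{\epsilon}}{dt}\le C_1(R,T)+C_2(R,T)\,V_{\epsilon}$, and Gronwall's lemma on $[0,T]$ with $V_{\epsilon}(0)\le\mathfrak{C}_{\sigma}$ yields the claimed constant $\mathcal{L}(R,T)$. The main obstacle is precisely $I_2$: unlike classical Smoluchowski coagulation, where a cluster of size $\le R$ arises only from smaller clusters and the $\sigma$-moment over $[0,R]$ is self-contained, in the RBK model the gain draws on arbitrarily large clusters, so the whole argument hinges on extracting enough decay from the strict subquadratic hypothesis \eqref{ker2} and the product structure \eqref{ker}--\eqref{ker1}, combined with the uniform moment and tail bounds, to convert these large-size contributions into a constant rather than an uncontrolled higher moment of $\mathfrak{f}_{\epsilon}$.
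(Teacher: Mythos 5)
Your overall scaffolding matches the paper's: truncate to $V_{\epsilon}(t)=\epsilon\sum_{i=1}^{m+1}\sigma(f_i^{\epsilon})$, control $V_{\epsilon}(0)$ by Jensen and \eqref{7}, differentiate along \eqref{drbk}, discard the non-positive loss term, apply \eqref{6}, and close with Gronwall. The gap is in how you apply \eqref{6} to the gain term. You pair $\sigma'(f_i^{\epsilon})$ with $f_{i+j}^{\epsilon}$, which produces $\sigma(f_{i+j}^{\epsilon})$ at arbitrarily large indices $i+j$, and your proposed remedy for the set $\{f_{i+j}^{\epsilon}>1\}$ does not work: the only available a priori bounds are $\epsilon\sum_k f_k^{\epsilon}\le C$ and $\epsilon^2\sum_k kf_k^{\epsilon}\le C$, and neither controls $\sigma(f_k^{\epsilon})$ for $k>m+1$ uniformly in $\epsilon$ (e.g.\ a single index $k_0\sim R/\epsilon$ may carry $f_{k_0}^{\epsilon}\sim 1/\epsilon$ consistently with both moment bounds, and then $\epsilon\,\sigma(f_{k_0}^{\epsilon})$ diverges as $\epsilon\to0$ for any superlinear $\sigma$). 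The superlinear part of $\sigma$ is exactly what de la Vall\'ee Poussin forces, so this term cannot be ``absorbed by the first-moment bound.'' Your $I_1$ has a second, related defect: having spent $f_{i+j}^{\epsilon}$ inside \eqref{6}, you are left with $\sum_j a_{i+j,j}^{\epsilon}f_j^{\epsilon}$, where for large $j$ both kernel arguments $\epsilon(i+j)$ and $\epsilon j$ are large; the product structure \eqref{ker}--\eqref{ker1} then gives growth of order $r(\epsilon(i+j))r(\epsilon j)\sim o\bigl((\epsilon j)^2\bigr)$ against a \emph{single} density $f_j^{\epsilon}$, which would require a second moment that is not available, and \eqref{ker2} does not apply because neither argument stays in a compact set.

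The paper circumvents both problems by first reindexing the gain term (via \eqref{wdrbk1}) into the form $\sum\sigma'(f_{i-j}^{\epsilon})a_{i,j}^{\epsilon}f_i^{\epsilon}f_j^{\epsilon}$ over $1\le i-j\le m+1$, so that the kernel always appears multiplied by \emph{both} densities at its arguments, and then choosing the pairing in \eqref{6} region by region: for $j\le m$ (regions $J_{1,\epsilon}$, $J_{2,\epsilon}$) it pairs $\sigma'(f_{i-j}^{\epsilon})$ with $f_j^{\epsilon}$, so every resulting $\sigma$-argument has index at most $m+1$ and is Gronwall-able against $V_{\epsilon}$, while \eqref{ker2} applies because one kernel argument stays in $[0,R]$; for $i,j\ge m+1$ (region $J_{3,\epsilon}$) it pairs $\sigma'(f_{i-j}^{\epsilon})$ with the constant $1$, paying only $\sigma(1)+\sigma(f_{i-j}^{\epsilon})$, and the quadratic kernel growth $r(\epsilon i)r(\epsilon j)$ factorizes against $f_i^{\epsilon}f_j^{\epsilon}$ into a product of two tail first moments controlled by \eqref{bound_r}, \eqref{dmass} and Lemma \ref{tail}. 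To repair your argument you would need to adopt this reindexing and region-dependent choice of pairing; as written, the estimates for $I_1$ (large $j$) and $I_2$ (the set $\{f_{i+j}^{\epsilon}>1\}$) do not close.
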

\begin{proof}
For every $R \ge 1$, there exists an integer $m \ge 1$ such that $m \epsilon < R \le (m+1) \epsilon$. We require the following constraints on $a_{i,j}$ and $r(\varsigma)$ to prove Lemma \ref{dtcint1}.

 Since, we know from \eqref{ker}, \eqref{ker1}, and \eqref{da2} that there exists a constant $\mathcal{L}_0$ depending on $R$ such that
\begin{align}\label{bound_a}
\sup_{i,j\le m+1} a_{i,j}^{\epsilon} \le \mathcal{L}_0 (R) \epsilon.
\end{align}

Following this, \eqref{ker}-\eqref{ker2} implies that for each $R\ge 1$, there exists a constant $\Omega_R$ such that
\begin{align}\label{bound_r}
\sup_{x \ge R} \bigg(\frac{r(x)}{x}\bigg)=\Omega_R \to 0 \ \ \mbox{as} \ \ R \to \infty.
\end{align}

Now, we turn to prove Lemma \ref{dtcint1}. By using \eqref{drbk}, \eqref{6} and change of order of summation, we conclude that
\begin{align}\label{int1}
\frac{d}{dt} \sum_{i=1}^{m+1} \epsilon \sigma(f_i^{\epsilon}) = & \epsilon \bigg[ \sum_{i=1}^{m+1} \sum_{j=1}^{\infty} \sigma'(f^{\epsilon}_{i}) a_{i+j,j}^{\epsilon} f_{i+j}^{\epsilon} f_j^{\epsilon}- \sum_{i=1}^{m+1} \sum_{j=1}^{\infty} \sigma'(f^{\epsilon}_{i}) a_{i,j}^{\epsilon} f_{i}^{\epsilon} f_j^{\epsilon}\bigg] \nonumber\\
 \le & \epsilon \bigg[\sum_{i=1}^{m+1} \sum_{j=1}^{i-1} \sigma'(f^{\epsilon}_{i-j}) a_{i,j}^{\epsilon} f_{i}^{\epsilon} f_j^{\epsilon}+\sum_{i=m+2}^{\infty} \sum_{j=i-(m+1)}^{i-1} \sigma'(f^{\epsilon}_{i-j}) a_{i,j}^{\epsilon} f_{i}^{\epsilon} f_j^{\epsilon}\bigg]\nonumber\\
\le & \underbrace{\epsilon \sum_{i=1}^{m+1} \sum_{j=1}^{i-1} \sigma'(f^{\epsilon}_{i-j}) a_{i,j}^{\epsilon} f_{i}^{\epsilon} f_j^{\epsilon}}_{:=J_{1, \epsilon}(R,t)}+\underbrace{\epsilon \sum_{i=m+2}^{\infty} \sum_{\substack{j=1 \\ 1\le i-j \le m+1}}^{m} \sigma'(f^{\epsilon}_{i-j}) a_{i,j}^{\epsilon} f_{i}^{\epsilon} f_j^{\epsilon}}_{:=J_{2, \epsilon}(R,t)}\nonumber\\
& + \underbrace{\epsilon\sum_{i=m+2}^{\infty} \sum_{\substack{j=m+1 \\ 1\le i-j \le m+1}}^{i-1} \sigma'(f^{\epsilon}_{i-j}) a_{i,j}^{\epsilon} f_{i}^{\epsilon} f_j^{\epsilon}}_{:=J_{3, \epsilon}(R,t)}.
\end{align}

Now, for $J_{1, \epsilon}(R,t)$, we use \eqref{2.1}, \eqref{4}, \eqref{6}, and \eqref{bound_a} to obtain
\begin{align*}
J_{1, \epsilon}(R,t) \le & \epsilon^2 \mathcal{L}_0(R) \sum_{i=1}^{m+1} \sum_{j=1}^{i-1} [\sigma(f^{\epsilon}_{i-j})+\sigma(f^{\epsilon}_{j})]f_i^{\epsilon}\\
\le & 2\epsilon^2 \mathcal{L}_0(R) \sum_{i=1}^{m+1} \sum_{j=1}^{i-1} \sigma(f^{\epsilon}_{j})f_i^{\epsilon}\\
\le & 2 \mathcal{L}_0(R) \|f^{\text{in}}\|_{0,1} \sum_{i=1}^{m+1} \epsilon \sigma(f^{\epsilon}_{i}).
\end{align*}

Next, we turn to show some bound on $J_{2, \epsilon}(R,t)$. Owing to \eqref{ker}-\eqref{ker2}, \eqref{da2}, \eqref{2.1}-\eqref{2.2} and \eqref{6} along with Lemma \ref{me1}, we get
\begin{align*}
J_{2, \epsilon}(R,t) \le & \epsilon^2 \sum_{i=m+2}^{\infty} \sum_{\substack{j=1 \\ 1\le i-j \le m+1}}^{m} [\sigma(f^{\epsilon}_{i-j})+\sigma(f^{\epsilon}_{j})] \mathfrak{K}(\epsilon i, \epsilon j) f_{i}^{\epsilon}\\

\le & \epsilon^3 \bigg(\sup_{y>R} \sup_{x\in [0,R]}\frac{\mathfrak{K}(x,y)}{y}\bigg) \sum_{i=m+2}^{\infty} \sum_{\substack{j=1 \\ 1\le i-j \le m+1}}^{m} [\sigma(f^{\epsilon}_{i-j})+\sigma(f^{\epsilon}_{j})] i f_{i}^{\epsilon}\\

\le & 2 \bigg(\sup_{y>R} \sup_{x\in [0,R]}\frac{\mathfrak{K}(x,y)}{y}\bigg) \bigg(\sum_{j=1}^{m+1} \epsilon\sigma(f^{\epsilon}_{j})\bigg) \bigg(\epsilon^2 \sum_{i=m+2}^{\infty} i f_{i}^{\epsilon}\bigg)\\

\le & 4\bigg(\sup_{y>R} \sup_{x\in [0,R]}\frac{\mathfrak{K}(x,y)}{y}\bigg)\|f^{\text{in}}\|_{0,1}\sum_{j=1}^{m+1} \epsilon\sigma(f^{\epsilon}_{j}).
\end{align*}

Again, employing \eqref{ker}-\eqref{ker2}, \eqref{6}, and \eqref{da2}, $J_{3, \epsilon}(R,t)$ can be established as
\begin{align*}
J_{3, \epsilon}(R,t) \le \epsilon^2(1+A) \sum_{i=m+2}^{\infty} \sum_{\substack{j=m+1 \\ 1\le i-j \le m+1}}^{i-1} [\sigma(1)+\sigma(f^{\epsilon}_{i-j})] r(\epsilon i) r(\epsilon j) f_{i}^{\epsilon} f_j^{\epsilon}.
\end{align*}

Following that, the estimates \eqref{2.2}-\eqref{3} and \eqref{bound_r} along with Lemma \ref{tail} imply that
\begin{align*}
J_{3, \epsilon}(R,t) \le & (1+A) \sigma(1) \Omega_R^2 \bigg(\epsilon^2\sum_{i=m+1}^{\infty} i f_{i}^{\epsilon} \bigg)^2 +(1+A)\Omega_R^2 \sum_{i=m+2}^{\infty} \sum_{\substack{j=m+1 \\ 1\le i-j \le m+1}}^{i-1} \sigma(f^{\epsilon}_{i-j}) \epsilon^4 ij f_{i}^{\epsilon} f_j^{\epsilon}\\

 \le & 4(1+A) \sigma(1) \Omega_R^2 \|f^{\text{in}}\|_{0,1}^2 + \mathcal{L}_1(R, \epsilon) \sum_{i=1}^{m+1}\sigma(f^{\epsilon}_{i}),
\end{align*}
where $\mathcal{L}_1(R)$ is a constant depends on $A$, $R$ and $f^{\text{in}}$ such that $\mathcal{L}_1(R, \epsilon) \to 0$ as $R \to \infty$.


By inserting the bounds on $J_{1, \epsilon}(R,t)$, $J_{2, \epsilon}(R,t)$, and $J_{3, \epsilon}(R,t)$ into \eqref{int1} and the integrating with respect to t, we obtain
\begin{align*}
\sum_{i=1}^{m+1} \epsilon \sigma(f_i^{\epsilon}) \le & \sum_{i=1}^{m+1} \epsilon \sigma(f_i^{0,\epsilon})+ T \mathcal{L}_2(R)+\mathcal{L}_3(R)\int_0^{T} \sum_{i=1}^{m+1} \epsilon \sigma(f_i^{\epsilon}),
\end{align*}
where $\mathcal{L}_2(R)$ and $\mathcal{L}_3(R)$ are some constants depends on $A$, $R$, $\epsilon$ and $f^{\text{in}}$, written as
\begin{eqnarray*}
	\mathcal{L}_2(R)&=& 4(1+A) \sigma(1) \Omega_R^2 \|f^{\text{in}}\|_{0,1}^2\\
	\mathcal{L}_3(R)&=& 2 \mathcal{L}_0(R) \|f^{\text{in}}\|_{0,1}+ 4\bigg(\sup_{y>R} \sup_{x\in [0,R]}\frac{\mathfrak{K}(x,y)}{y}\bigg)\|f^{\text{in}}\|_{0,1}+\mathcal{L}_1(R, \epsilon)
\end{eqnarray*}

Therefore, by using Gronwall's Inequality in above estimate, we conclude that
\begin{align*}
\sum_{i=1}^{m+1} \epsilon \sigma(f_i^{\epsilon}) \le & \bigg[\sum_{i=1}^{m+1} \epsilon \sigma(f_i^{0,\epsilon})+ T \mathcal{L}_2(R)\bigg] \exp (T\mathcal{L}_3(R)).
\end{align*}

Finally, Jensen's inequality along with \eqref{7} gives
\begin{align*}
\int_0^R \sigma(\mathfrak{f}_{\epsilon}(\varsigma , t)) d \varsigma \le & \epsilon \sum_{i=1}^{m+1}  \sigma(f_i^{\epsilon})\\
\le & [\mathfrak{C}_{\sigma}+ T \mathcal{L}_2(R)\bigg] \exp (T\mathcal{L}_3(R))=\mathcal{L}(R,T)
\end{align*}
which completes the proof of Lemma \ref{dtcint1}.
\end{proof}

\begin{lemma}\label{dtcbound1}
For each $T \in (0, +\infty)$, $R \ge 1$ and $\psi \in L^{\infty}(0,R)$,
$$t \to \int_0^{\infty} \psi(\varsigma) \mathfrak{f}_{\epsilon} (\varsigma, t) d \varsigma $$ is bounded in $W^{1, \infty}(0,T)$.
\end{lemma}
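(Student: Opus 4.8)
The plan is to show separately that the scalar function $G_\epsilon(t):=\int_0^\infty\psi(\varsigma)\mathfrak{f}_\epsilon(\varsigma,t)\,d\varsigma$ is bounded in $L^\infty(0,T)$ and that its time derivative is bounded in $L^\infty(0,T)$, both uniformly in $\epsilon\in(0,1)$; together these give a uniform $W^{1,\infty}(0,T)$ bound. First I would rewrite $G_\epsilon$ in discrete form: from \eqref{ca1} and \eqref{d4}, $G_\epsilon(t)=\epsilon\sum_{i\ge1}\psi_i^\epsilon f_i^\epsilon(t)$ with $\psi_i^\epsilon=\frac1\epsilon\int_{\Lambda_i^\epsilon}\psi$. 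Two facts about the test coefficients will be used repeatedly: $|\psi_i^\epsilon|\le\|\psi\|_{L^\infty(0,R)}=:M$ for every $i$, and, since $\psi$ is supported in $(0,R)$ and $m\epsilon<R\le(m+1)\epsilon$, one has $\psi_i^\epsilon=0$ whenever $i\ge m+2$. The $L^\infty$ bound is then immediate from Lemma~\ref{me1}, since $|G_\epsilon(t)|\le M\int_0^\infty\mathfrak{f}_\epsilon(\varsigma,t)\,d\varsigma\le M\|\mathfrak{f}^{\text{in}}\|_{0,1}$.

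For the derivative I would apply the weak formulation \eqref{wdrbk} with $\phi_i=\psi_i^\epsilon$ (and the coefficients $a_{i,j}^\epsilon$, $f_i^\epsilon$) and multiply by $\epsilon$, to obtain
$$\frac{d}{dt}G_\epsilon(t)=\epsilon\sum_{i=1}^\infty\sum_{j=1}^{i-1}\big[\psi_{i-j}^\epsilon-\psi_i^\epsilon-\psi_j^\epsilon\big]\,a_{i,j}^\epsilon f_i^\epsilon(t)f_j^\epsilon(t).$$
Exploiting linearity I would split this into the three contributions coming from $\psi_{i-j}^\epsilon$, $\psi_i^\epsilon$ and $\psi_j^\epsilon$, and estimate each using the support restriction together with the a priori mass bounds \eqref{3}--\eqref{4}. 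The loss term carrying $\psi_i^\epsilon$ survives only for $i\le m+1$ (hence $j<i\le m+1$ as well), so both arguments lie in a bounded region and \eqref{bound_a} gives $a_{i,j}^\epsilon\le\mathcal{L}_0(R)\epsilon$; this piece is controlled by $M\mathcal{L}_0(R)\|\mathfrak{f}^{\text{in}}\|_{0,1}^2$. The term with $\psi_j^\epsilon$ survives only for $j\le m+1$; for fixed such $j$ I would bound $\sum_i a_{i,j}^\epsilon f_i^\epsilon$ by splitting at $i=m+1$, using \eqref{bound_a} on the compact part and the subquadratic estimate \eqref{ker2} on the tail $i>m+1$, and close with \eqref{4}.

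The genuinely delicate contribution is the gain term carrying $\psi_{i-j}^\epsilon$, which survives for $1\le i-j\le m+1$ with $i,j$ otherwise unrestricted. Here I would split according to whether $j\le m+1$ or $j>m+1$. In the first case $i=j+(i-j)\le2(m+1)$, so both arguments are again confined to a bounded region and the boundedness of $\mathfrak{K}$ there together with \eqref{4} closes the estimate. The hard part is the second case $j>m+1$, where both $\epsilon i$ and $\epsilon j$ exceed $R$ and no compact-set bound on the kernel is available: this is exactly where the structural assumption \eqref{ker1}, namely $\mathfrak{K}(\varsigma,\varrho)\le(1+A)r(\varsigma)r(\varrho)$ for $\varsigma,\varrho\ge1$, must be invoked together with the growth control \eqref{bound_r} ($r(x)\le\Omega_R x$ for $x\ge R$) to bound the kernel by $(1+A)\Omega_R^2(\epsilon i)(\epsilon j)$. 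The resulting double sum then factorizes into two copies of the tail moment $\epsilon^2\sum_{i>m+1}i f_i^\epsilon$, each controlled by $2\|\mathfrak{f}^{\text{in}}\|_{0,1}$ via Lemma~\ref{tail} (or simply \eqref{3}), giving a bound of order $M(1+A)\Omega_R^2\|\mathfrak{f}^{\text{in}}\|_{0,1}^2$.

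Collecting the three contributions yields a bound on $|\tfrac{d}{dt}G_\epsilon(t)|$ independent of $t\in(0,T)$ and of $\epsilon$, which combined with the $L^\infty$ bound on $G_\epsilon$ gives the claimed uniform $W^{1,\infty}(0,T)$ estimate. The case of coefficients defined by \eqref{da1} is handled identically after replacing pointwise kernel values by their averages over $\Lambda_i^\epsilon\times\Lambda_j^\epsilon$. The main obstacle, as indicated, is the large-index gain term, where the mere boundedness $|\psi_k^\epsilon|\le M$ of the test coefficients is not enough and one must lean on the full subquadratic structure of $\mathfrak{K}$ and on the tail-moment control of Lemma~\ref{tail}.
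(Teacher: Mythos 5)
Your proposal is correct and follows essentially the same route as the paper: both pass through the discrete weak formulation with the truncated test coefficients $\psi_i^{\epsilon}$, control the kernel on the compact index region by $\mathcal{L}_0(R)\epsilon$ via \eqref{bound_a}, and handle the large-index gain term with the multiplicative bound $\mathfrak{K}\le(1+A)r(\varsigma)r(\varrho)$ together with \eqref{bound_r} and the first-moment estimate \eqref{3}. If anything, your bookkeeping is slightly more complete, since you explicitly estimate the loss contribution with $i\le m+1$ and $j$ large using the subquadratic bound \eqref{ker2}, a term that the paper's displayed inequality passes over.
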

\begin{proof}
Suppose function $\psi \in L^{\infty}(0,R)$ for $R \ge 1$. Now, for $i \ge 1$, we set
\begin{align*}
\psi_{i}^{\epsilon} = \frac{1}{\epsilon} \int_{\Lambda_i^{\epsilon}} \psi(\varsigma) d\varsigma.
\end{align*}
Let us consider an integer $m$ such that $m \epsilon \le R \le (m+1)\epsilon$, then it follows from \eqref{wdrbk1}, \eqref{da2}, \eqref{2.1}-\eqref{2.2}, \eqref{bound_a} and Lemma \ref{me1} that
\begin{align*}
\bigg| \frac{d}{dt} \int_0^{R} \psi(\varsigma)\mathfrak{f}_{\epsilon}(\varsigma, t) d\varsigma \bigg| \le &  \epsilon \|\psi\|_{L^{\infty}(0,R)} \bigg[\sum_{i=1}^{m+1} \sum_{j=1}^{i-1}  a_{i,j}^{\epsilon} f_i^{\epsilon} f_j^{\epsilon} +\sum_{i=m+2}^{\infty} \sum_{j=i-(m+1)}^{i-1}  a_{i,j}^{\epsilon} f_i^{\epsilon} f_j^{\epsilon} \bigg]\\

 \le & \|\psi\|_{L^{\infty}(0,R)}\bigg[\mathcal{L}_0(R) \|f^{\text{in}}\|_{0,1}^2+ (1+A)\sum_{i=m+1}^{\infty}\sum_{j=m+1}^{\infty} r(\epsilon i) r(\epsilon j) \epsilon^2 f_i^{\epsilon} f_j{\epsilon}\bigg]\\
 
  \le & \|\psi\|_{L^{\infty}(0,R)}\bigg[\mathcal{L}_0(R) \|f^{\text{in}}\|_{0,1}^2+ (1+A)\Omega_R^2 \bigg(\epsilon^2 \sum_{i=m+1}^{\infty} if_i^{\epsilon} \bigg)^2\bigg]\\
  
  \le & \|\psi\|_{L^{\infty}(0,R)}\bigg[\mathcal{L}_0(R) \|f^{\text{in}}\|_{0,1}^2+ 4(1+A)\Omega_R^2 \|f^{\text{in}}\|_{0,1}^2\bigg].
\end{align*}

The above inequality together with Lemma \ref{me1} gives proof of Lemma \ref{dtcbound1}.
\end{proof}
\medskip

\subsection{Weak $L^1$- compactness}

\begin{lemma}\label{weakl1}
Let $T \in(0, +\infty)$ and for each $\epsilon\in (0,1)$, $\mathfrak{f}_{\epsilon}$ is the solution of corresponding ``modified" CRBK coagulation equations. Then there exists a non-negative function $\mathfrak{f} \in L^{\infty}(0,T; L^1(\mathbb{R}_+))$ and a subsequence of $(\mathfrak{f}_{\epsilon})$ (not relabeled) such that
\begin{eqnarray}\label{9}
\mathfrak{f}_{\epsilon} \to \mathfrak{f} \ \ \ \ \mbox{in} \ \ \ \ \mathcal{C}([0,T]; w-L^1(\mathbb{R}_+)).
\end{eqnarray}
\end{lemma}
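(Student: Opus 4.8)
The plan is to prove Lemma \ref{weakl1} by a refined Arzel\`a--Ascoli argument adapted to the space $\mathcal{C}([0,T]; w\text{-}L^1(\mathbb{R}_+))$, whose two hypotheses are precisely the estimates assembled in the preceding subsections. Concretely, I would invoke the version of the Arzel\`a--Ascoli theorem valid when the target is a weakly compact subset of $L^1(\mathbb{R}_+)$ equipped with its weak topology (which, since $L^1(\mathbb{R}_+)$ is separable, is metrizable on such subsets): a family in $\mathcal{C}([0,T]; w\text{-}L^1)$ is relatively compact provided (a) for each fixed $t\in[0,T]$ the set of values $\{\mathfrak{f}_{\epsilon}(\cdot,t)\}_{\epsilon\in(0,1)}$ is relatively weakly compact in $L^1(\mathbb{R}_+)$, and (b) the family $\{t\mapsto \mathfrak{f}_{\epsilon}(\cdot,t)\}_{\epsilon\in(0,1)}$ is weakly equicontinuous in time, uniformly in $\epsilon$. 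I would verify (a) and (b) separately and then extract the convergent subsequence, finally checking that the limit inherits non-negativity and membership in $L^{\infty}(0,T;L^1)$.

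For (a) I would appeal to the Dunford--Pettis theorem, which characterizes relative weak compactness in $L^1$ by boundedness together with uniform integrability and tightness. Boundedness is immediate from \eqref{4} in Lemma \ref{me1}, which gives $\int_0^{\infty}\mathfrak{f}_{\epsilon}(\varsigma,t)\,d\varsigma\le\|\mathfrak{f}^{\mathrm{in}}\|_{0,1}$ uniformly in $\epsilon$ and $t$. Tightness (no escape of mass to infinity) follows from the first-moment control \eqref{3}, since for any $R\ge1$ one has $\int_R^{\infty}\mathfrak{f}_{\epsilon}(\varsigma,t)\,d\varsigma\le R^{-1}\int_0^{\infty}\varsigma\,\mathfrak{f}_{\epsilon}(\varsigma,t)\,d\varsigma\le 2R^{-1}\|\mathfrak{f}^{\mathrm{in}}\|_{0,1}$, which tends to $0$ uniformly as $R\to\infty$. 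The remaining point, absence of concentration on bounded sets, is exactly what Lemma \ref{dtcint1} supplies: choosing through the refined de la Vall\'ee Poussin theorem a convex superlinear $\sigma$ as in Lemma \ref{convex} with $\mathfrak{C}_{\sigma}<\infty$, the bound \eqref{8} yields $\int_0^R\sigma(\mathfrak{f}_{\epsilon}(\varsigma,t))\,d\varsigma\le\mathcal{L}(R,T)$, and superlinearity of $\sigma$ translates this into uniform integrability on every $[0,R]$. Combining the three properties gives (a).

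For (b) I would use Lemma \ref{dtcbound1}: for each $R\ge1$ and each $\psi\in L^{\infty}(0,R)$ the map $t\mapsto\int_0^{\infty}\psi(\varsigma)\mathfrak{f}_{\epsilon}(\varsigma,t)\,d\varsigma$ is bounded in $W^{1,\infty}(0,T)$ uniformly in $\epsilon$, hence uniformly Lipschitz in $t$ and therefore equicontinuous; together with the tightness already established this upgrades to weak equicontinuity of $t\mapsto\mathfrak{f}_{\epsilon}(\cdot,t)$ in $L^1(\mathbb{R}_+)$, uniformly in $\epsilon$. With (a) and (b) in hand, the Arzel\`a--Ascoli argument produces a subsequence (not relabeled) and a limit $\mathfrak{f}$ with $\mathfrak{f}_{\epsilon}\to\mathfrak{f}$ in $\mathcal{C}([0,T]; w\text{-}L^1(\mathbb{R}_+))$. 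Non-negativity of $\mathfrak{f}$ is preserved because weak $L^1$-limits of non-negative functions are non-negative, and passing to the limit in \eqref{4} gives $\mathfrak{f}\in L^{\infty}(0,T;L^1(\mathbb{R}_+))$, establishing \eqref{9}.

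The main obstacle I anticipate is the careful justification of the Arzel\`a--Ascoli step in the weak topology rather than a technical estimate: one must check that the values lie in a fixed weakly compact, hence metrizable, subset of $L^1$ so that a diagonal extraction over a countable set of test functions (dense in the predual when combined with tightness) can be carried out, and that the weak equicontinuity obtained for each $\psi$ can be made uniform over the relevant class of test functions. The preceding lemmas are tailored to furnish exactly the bounds needed, so once the abstract compactness statement is correctly set up the verification should be routine.
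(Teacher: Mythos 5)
Your proposal is correct and follows essentially the same route as the paper: the refined Arzel\`a--Ascoli theorem in $\mathcal{C}([0,T]; w\text{-}L^1)$, with pointwise weak compactness obtained from Dunford--Pettis (boundedness via Lemma \ref{me1}, tightness via the first-moment bound, uniform integrability via Lemma \ref{dtcint1} and de la Vall\'ee Poussin), and weak equicontinuity obtained from the $W^{1,\infty}$ bound of Lemma \ref{dtcbound1}. The one step you flag but do not carry out --- upgrading equicontinuity from compactly supported test functions to general $\psi\in L^{\infty}(\mathbb{R}_+)$ --- is exactly where the paper spends its remaining effort (approximation by $\psi_n\in\mathcal{C}_c^1$, Egorov's theorem, and a three-term splitting controlled by uniform integrability and tightness), so your identification of the obstacle matches the paper's resolution of it.
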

\begin{proof}
Suppose $T >0$.  According to a refined version of the Arzela-Ascoli Theorem \cite[Theorem 1.3.2]{Vrabie}), for showing that the sequence $(\mathfrak{f}_{\epsilon})$ is relatively sequentially compact in $\mathcal{C}([0,T]; w-L^1(\mathbb{R}_+))$, it suffices to check that the sequence $(\mathfrak{f}_{\epsilon})$, $n\ge 1$ has the two properties listed below

\begin{enumerate}
\item[(i)] for all $t\in [0,T]$, the set $\{\mathfrak{f}_{\epsilon}(t), \epsilon\in (0,1)\}$ is weakly relatively compact in $L^1(\mathbb{R}_+)$,\\
\item[(ii)] At every $t\in [0,T]$, the family $\{\mathfrak{f}_{\epsilon}, \epsilon\in (0,1)\}$ is weakly equicontinuous in $L^1(\mathbb{R}_+)$ (see \cite[Definition 1.3.1]{Vrabie}).
\end{enumerate}

We use the Dunford-Pettis theorem to demonstrate property (i). For this, we need to prove following corollary.
\begin{corollary}\label{C1}
Let $T \in (0, \infty)$. Let us assume that the coagulation kernel satisfies $\mathfrak{K}_{\epsilon}$ satisfies \eqref{ker}-\eqref{ker2}, then the following hold true
\begin{enumerate}
\item For each $t\in [0,T]$, there exists $\mathcal{M}(T)>0$ such that 
\begin{eqnarray}\label{10}
\int_0^{\infty} (1+\varsigma) \mathfrak{f}_{\epsilon}(\varsigma,t) d\varsigma \le \mathcal{M}(T)~~~ \text{for}~~ \epsilon \in(0,1)
\end{eqnarray}
\item Given $\varepsilon>0$, there exists $R_{\varepsilon}>0$ such that for $t\in [0,T]$
\begin{eqnarray}\label{11}
\sup_{\epsilon}\bigg\{\int_{R_{\varepsilon}}^{\infty}\mathfrak{f}_{\epsilon}(\varsigma,t)d\varsigma\bigg\}\le \varepsilon.
\end{eqnarray}
\item (\textbf{Uniform Integrability}) Choose $\varepsilon>0$. Then there exists $\delta_{\varepsilon}>0$ such that for every measurable set $E \subset (0,\infty)$, we have 
$$\int_E \mathfrak{f}_{\epsilon}(\varsigma,t) d\varsigma <\varepsilon~~~~~\text{whenever}~~|E|<\delta_{\varepsilon}$$
for all $\epsilon \in (0,1)$ and $t\in [0,T]$.
\end{enumerate}
\end{corollary}
\begin{proof}
The estimate \eqref{10} can easily be followed from Lemma \ref{me1}. Next, for every $\epsilon \in (0,1)$, let $R_{\varepsilon}>0$ be such that $R_{\varepsilon}>\dfrac{\mathcal{M}(T)}{\varepsilon}$ for some fixed  $\varepsilon >0$. Then, we obtain from \eqref{3} that
\begin{eqnarray*}
\int_{R_{\varepsilon}}^{\infty} \mathfrak{f}_{\epsilon}(\varsigma,t) d\varsigma &\le & \frac{1}{R_{\varepsilon}}
\int_{R_{\varepsilon}}^{\infty} \varsigma \mathfrak{f}_{\epsilon}(\varsigma,t) d\varsigma\\
& \le& \frac{1}{R_{\varepsilon}} \mathcal{M}(T)
\end{eqnarray*}
for each $t\in [0,T]$, which implies \eqref{11}.

Since $\mathfrak{f}^{\text{in}} \in L^1(\mathbb{R}_+)$, then there exist a function $\sigma$ satisfying all assumptions of Lemma \ref{convex} along with
$$ \lim_{\varsigma \to +\infty} \frac{\sigma(\varsigma)}{\varsigma} =\infty,$$ and 
$$ \int_0^{\infty} \sigma(\mathfrak{f}^{\text{in}})(\varsigma) d\varsigma < +\infty. $$
Then, third part of Corollary \ref{C1} is a straightforward consequence of Lemma \ref{dtcint1} along with a refines version of the de la Vall\'{e}e Poussin theorem \cite[Theorem 8]{Laurencot3} and definition of uniform integrability \cite[Definition 5]{Laurencot3}. This completes the proof of Corollary \ref{C1}.
\end{proof}
Thanks to Dunford-Pettis theorem which states that if Corollary \ref{C1} holds true then for every $t \in [0,T]$, the set $\{\mathfrak{f}_{\epsilon}(t), \epsilon\in (0,1)\}$ is weakly relatively compact in $L^1(\mathbb{R}_+)$.

Next, we need to show the weak equicontinuity of the family $\{\mathfrak{f}_{\epsilon}, \epsilon\in (0,1)\}$ in $L^1(\mathbb{R}_+)$ for each $t \in [0,T]$. Let $\psi \in L^{\infty}(\mathbb{R}_+)$, then there exists a sequence of function $(\psi_n)_{n \ge 1}$ in $\mathcal{C}_c^1(\mathbb{R}_+)$ satisfying
\begin{eqnarray}\label{12}
\begin{aligned}
 \psi_n \to \psi \ \ \ \ \mbox{a.e. in } \ \ \ \mathbb{R}_+,\\
 \|\psi_n\|_{L^{\infty}} \le \|\psi\|_{L^{\infty}}.
\end{aligned}
\end{eqnarray}
Let us fix $\delta \in (0,1)$. Then third part of Corollary \ref{C1} ensure that there exist some real number $\varepsilon(\delta) >0$ such that, for each measurable subset $E \subset\mathbb{R}_+$ with $|E| \le \varepsilon(\delta)$ 
\begin{eqnarray}\label{13}
\sup_{\epsilon \in (0,1)} \sup_{t \in [0,T]} \int_E \mathfrak{f}_{\epsilon} (\varsigma, t) d\varsigma \le \delta.
\end{eqnarray} 
In addition, \eqref{12} and the Egorov's theorem imply that there exists a measurable subset $E_{\delta}$ on $[0,\frac{1}{\delta}]$ which satisfy
$$|E_{\delta}|\le \varepsilon(\delta) \ \ \ \ \mbox{and} \ \ \ \ \ \lim_{n \to \infty} \sup_{[0, \frac{1}{\delta}] \backslash E_{\delta}} |\psi_n -\psi|=0.$$
Next, for all $t_1, t_2 \in (0,T)$ and $R\in [0, \frac{1}{\delta}]$, we obtain
\begin{eqnarray}\label{14}
\bigg| \int_0^{\infty} [\mathfrak{f}_{\epsilon}(\varsigma, t_2)-\mathfrak{f}_{\epsilon}(\varsigma, t_1)] \psi(\varsigma)d \varsigma \bigg| &\le & \underbrace{\bigg| \int_0^{R} [\mathfrak{f}_{\epsilon}(\varsigma, t_2)-\mathfrak{f}_{\epsilon}(\varsigma, t_1)] \psi_{n}(\varsigma)d \varsigma \bigg|}_{:=I^n_{1, \epsilon}} \nonumber\\
& & +\underbrace{\bigg| \int_0^{R} [\mathfrak{f}_{\epsilon}(\varsigma, t_2)-\mathfrak{f}_{\epsilon}(\varsigma, t_1)] [\psi(\varsigma)-\psi_n(\varsigma)] d \varsigma \bigg|}_{:=I^n_{2, \epsilon}}\nonumber\\
& & +\underbrace{\bigg| \int_R^{\infty} [\mathfrak{f}_{\epsilon}(\varsigma, t_2)-\mathfrak{f}_{\epsilon}(\varsigma, t_1)] \psi(\varsigma)d \varsigma \bigg|}_{:=I_{3, \epsilon}}.
\end{eqnarray}
Now, let us find estimate $I^n_{1, \epsilon}$. For this, by recalling definition of $\varepsilon(\delta)$, $E_{\delta}$ and $\psi_n$, we obtain that
\begin{eqnarray*}
I^n_{1, \epsilon} = \bigg| \int_{t_1}^{t_2} \frac{d}{ds}\bigg( \int_0^R \mathfrak{f}_{\epsilon}(\varsigma, s) \psi_{n}(\varsigma) d\varsigma \bigg) ds \bigg|,
\end{eqnarray*}
then Lemma \ref{dtcbound1} implies that there exists a constant $\mathcal{L}_4(R)$ depends on $R$, $\psi$, $f^{\text{in}}$ and $A$ such that
\begin{eqnarray*}
I^n_{1, \epsilon} \le \mathcal{L}_4(R) |t_2 -t_1|,
\end{eqnarray*}
and $$\mathcal{L}_4(R)=\|\psi\|_{L^{\infty}(0,R)} \|f^{\text{in}}\|_{0,1}^2 [\mathcal{L}_0(R) + 4(1+A)\Omega_R^2 ].$$
Next, we turn our attention to $I^n_{2, \epsilon}$, which is evaluated from Lemma \ref{me1} as
\begin{eqnarray*}
I^n_{2, \epsilon} &\le & \int_{E_{\delta}} |\mathfrak{f}_{\epsilon}(\varsigma, t_2)-\mathfrak{f}_{\epsilon}(\varsigma, t_1)||\psi(\varsigma)-\psi_n(\varsigma)| d\varsigma\\
& &+ \int_{[0,R]\backslash E_{\delta}} |\mathfrak{f}_{\epsilon}(\varsigma, t_2)-\mathfrak{f}_{\epsilon}(\varsigma, t_1)||\psi(\varsigma)-\psi_n(\varsigma)| d\varsigma \\
& \le & 4\delta \|\mathfrak{f}^{\mbox{in}}\|_{0,1} \|\psi\|_{L^{\infty}}+2\|\mathfrak{f}^{\mbox{in}}\|_{0,1} \sup_{[0,R]\backslash E_{\delta}} |\psi_n -\psi|.
\end{eqnarray*}
By following the same process as above, we obtain
$$I_{3, \epsilon} \le \frac{4\|\mathfrak{f}^{\mbox{in}}\|_{0,1}}{R} \|\psi\|_{L^{\infty}}. $$
Next, by putting bounds on $I_{1, \epsilon}$, $I_{2, \epsilon}$ and $I_{3, \epsilon}$ in \eqref{14} and then pass into the limits as $n \to +\infty$ and $\delta \to 0$, respectively, we conclude that there exists a constant $\mathcal{L}_5$ depends on $R$, $\psi$, $f^{\text{in}}$ and $A$ such that
$$\sup_{\epsilon \in (0,1)} \bigg| \int_0^{\infty} [\mathfrak{f}_{\epsilon}(\varsigma, t_2) - \mathfrak{f}_{\epsilon}(\varsigma, t_1)]\psi(\varsigma) d\varsigma \bigg| \le \mathcal{L}_5 |t_2- t_1|.$$

Therefore, with a refined version of the Arzela-Ascoli Theorem, we complete the proof of Lemma \ref{weakl1}.
\end{proof}

\medskip
\subsection{Convergence}
In this section we examine that the function derived in Lemma \ref{weakl1} is a weak solution to the continuous RBK equations \eqref{crbk}-\eqref{ic}. We consider $\psi \in \mathcal{D}(\mathbb{R}_+)$ and $\psi_{\epsilon}$ as in \eqref{d4}. It was straightforward to verify that for all $t \in (0, \infty)$, $\mathfrak{f}_{\epsilon}$ satisfies
\begin{eqnarray}\label{15}
\int_0^{\infty} \mathfrak{f}_{\epsilon}(\varsigma,t) \psi_{\epsilon}(\varsigma) d\varsigma  - \int_0^{\infty} \mathfrak{f}_{\epsilon}(\varsigma,0) \psi_{\epsilon}(\varsigma) d\varsigma &=& \int_0^t \int_0^{\infty} \int_0^{\ell_{\epsilon}(\varsigma)} [T_{\epsilon}(\psi_{\epsilon})(\varsigma, \varrho)- \psi_{\epsilon}(\varsigma)- \psi_{\epsilon}(\varrho)]\nonumber\\
& & \hspace{1cm}\mathfrak{K}_{\epsilon}(\varsigma, \varrho) \mathfrak{f}_{\epsilon}(\varsigma,s) \mathfrak{f}_{\epsilon}(\varrho,s) d\varrho d\varsigma ds.
\end{eqnarray}
It is now necessary to pass the limit as $\epsilon \to 0$ in \eqref{15}. To accomplish this, we require convergence results for $\psi_{\epsilon}$ and $\mathfrak{K}_{\epsilon}$.
\begin{lemma}\label{conv1}
For every $R>0$, the sequences $(\psi_{\epsilon})$ and $(\mathfrak{K}_{\epsilon})$ defined by \eqref{d4} and \eqref{d2}, respectively, satisfy the following inequalities:
\begin{enumerate}
\item[(i)] $\psi_{\epsilon} \to \psi$ strongly in $L^{\infty}(\mathbb{R}_+)$ and
\begin{eqnarray}\label{16}
\|\psi_{\epsilon}\|_{L^{\infty}} \le \|\psi\|_{L^{\infty}}.
\end{eqnarray}
\item[(ii)] $T_{\epsilon}(\psi_{\epsilon}) \to \{(\varsigma, \varrho, t) \mapsto \psi(\varsigma-\varrho,t)\}$ strongly in $L^{\infty}(\mathbb{R}_+)$ and
\begin{eqnarray}\label{17}
\|T_{\epsilon}(\psi_{\epsilon})\|_{L^{\infty}} \le \|\psi\|_{L^{\infty}}.
\end{eqnarray}
\item[(iii)] $\mathfrak{K}_{\epsilon} \to \mathfrak{K}$ a.e. on $\mathbb{R}_+^2$ and
\begin{eqnarray}\label{18}
\|\mathfrak{K}_{\epsilon}\|_{L^{\infty}((0,R)^2)} \le \|\mathfrak{K}\|_{L^{\infty}((0,R)^2)}.
\end{eqnarray}
\end{enumerate}
\end{lemma}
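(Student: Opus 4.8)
The plan is to prove the three assertions one at a time, in each case extracting the uniform bound from the observation that the step coefficients are averages (or point values) of the underlying continuous object, and extracting the convergence from the uniform continuity of $\psi$ and the continuity of $\mathfrak{K}$. Throughout I write $\omega_\psi$ for the modulus of continuity of $\psi$, which is finite and tends to $0$ at the origin because $\psi\in\mathcal{D}(\mathbb{R}_+)$ is uniformly continuous.

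For (i), the bound is immediate: since $|\Lambda_i^\epsilon|=\epsilon$,
\[
|\psi_i^\epsilon|=\Big|\frac{1}{\epsilon}\int_{\Lambda_i^\epsilon}\psi(\varrho)\,d\varrho\Big|\le\|\psi\|_{L^\infty},
\]
and as the intervals $\Lambda_i^\epsilon$ are pairwise disjoint, $\|\psi_\epsilon\|_{L^\infty}=\sup_i|\psi_i^\epsilon|\le\|\psi\|_{L^\infty}$, which is \eqref{16}. For the strong convergence, I would fix $\varsigma\in\Lambda_i^\epsilon$ and write $\psi_\epsilon(\varsigma)-\psi(\varsigma)=\frac{1}{\epsilon}\int_{\Lambda_i^\epsilon}(\psi(\varrho)-\psi(\varsigma))\,d\varrho$; since every $\varrho\in\Lambda_i^\epsilon$ lies within $\epsilon$ of $\varsigma$, this is bounded by $\omega_\psi(\epsilon)$, whence $\|\psi_\epsilon-\psi\|_{L^\infty}\le\omega_\psi(\epsilon)\to0$.

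For (ii), on $\Lambda_i^\epsilon\times\Lambda_j^\epsilon$ with $1\le j\le i-1$ one has $T_\epsilon(\psi_\epsilon)=\psi_{i-j}^\epsilon$, and elsewhere it vanishes; the bound $|\psi_{i-j}^\epsilon|\le\|\psi\|_{L^\infty}$ then yields \eqref{17}. For the convergence I would note that if $(\varsigma,\varrho)\in\Lambda_i^\epsilon\times\Lambda_j^\epsilon$ with $j\le i-1$ then $\varsigma-\varrho\in((i-j-1)\epsilon,(i-j+1)\epsilon)$, while $\Lambda_{i-j}^\epsilon$ is centred at $(i-j)\epsilon$, so every point of $\Lambda_{i-j}^\epsilon$ lies within $2\epsilon$ of $\varsigma-\varrho$ and hence $|T_\epsilon(\psi_\epsilon)(\varsigma,\varrho)-\psi(\varsigma-\varrho)|\le\omega_\psi(2\epsilon)$. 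The only region requiring care is the neighbourhood of the diagonal $\{\varsigma=\varrho\}$, where the index truncation $j\le i-1$ in \eqref{d5} discards contributions: there $\varsigma-\varrho$ is of order $\epsilon$, but since $\psi\in\mathcal{D}(\mathbb{R}_+)$ vanishes on some $(0,\eta)$, both $\psi(\varsigma-\varrho)$ and the relevant coefficients $\psi_{i-j}^\epsilon$ vanish once $2\epsilon<\eta$. Combining the two cases gives $\|T_\epsilon(\psi_\epsilon)-\psi(\cdot-\cdot)\|_{L^\infty}\le\omega_\psi(2\epsilon)\to0$.

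For (iii), if $a_{i,j}^\epsilon$ is defined by \eqref{da2} then $a_{i,j}^\epsilon/\epsilon=\mathfrak{K}(\epsilon i,\epsilon j)$, so $\mathfrak{K}_\epsilon(\varsigma,\varrho)=\mathfrak{K}(\epsilon i,\epsilon j)$ on $\Lambda_i^\epsilon\times\Lambda_j^\epsilon$; since $|\epsilon i-\varsigma|<\epsilon/2$ and $|\epsilon j-\varrho|<\epsilon/2$, the continuity of $\mathfrak{K}$ in \eqref{ker1} gives $\mathfrak{K}_\epsilon(\varsigma,\varrho)\to\mathfrak{K}(\varsigma,\varrho)$ everywhere, while if $a_{i,j}^\epsilon$ is defined by \eqref{da1} then $a_{i,j}^\epsilon/\epsilon$ is the mean of $\mathfrak{K}$ over $\Lambda_i^\epsilon\times\Lambda_j^\epsilon$ and the same continuity argument yields convergence a.e. In either case $\mathfrak{K}_\epsilon(\varsigma,\varrho)$ is a point value or average of $\mathfrak{K}$ over a set contained in a neighbourhood of $(\varsigma,\varrho)$, so on $(0,R)^2$ it is dominated by $\|\mathfrak{K}\|_{L^\infty((0,R)^2)}$, which is \eqref{18}. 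The only genuinely delicate point in the whole argument is the diagonal analysis in (ii): the truncation $1\le j\le i-1$ makes the approximation of $\psi(\varsigma-\varrho)$ incomplete precisely where $\varsigma\approx\varrho$, and one must invoke the compact support of $\psi$ away from the origin to control that strip uniformly; all remaining steps are routine averaging and continuity estimates.
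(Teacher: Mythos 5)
Your proposal is correct and follows essentially the same route as the paper's proof: the $L^\infty$ bounds come from the fact that $\psi_i^\epsilon$, $\psi_{i-j}^\epsilon$ and $a_{i,j}^\epsilon/\epsilon$ are averages (or point values) of the underlying functions over cells of diameter $O(\epsilon)$, and the convergence follows from the uniform continuity of $\psi$ and the continuity of $\mathfrak{K}$ via a modulus-of-continuity estimate. Your additional care near the diagonal in (ii) — where the truncation $j\le i-1$ in \eqref{d5} produces no terms and one must invoke $\operatorname{supp}\psi\subset(\eta,\infty)$ to see that $\psi(\varsigma-\varrho)$ also vanishes there — is a point the paper's proof passes over silently, but it does not change the nature of the argument.
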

\begin{proof}
Let us begin with the proof of Lemma \ref{conv1}(i). By using the definition of $\psi_{\epsilon}$ from \eqref{d4}, we obtain
\begin{eqnarray*}
|\psi_{\epsilon}(\varsigma)| &\le & \sum_{i=1}^{\infty} \frac{1}{\epsilon} \int_{\Lambda_i^{\epsilon}} |\psi(\varrho)| d\varrho \Xi_i^{\epsilon}(\varsigma)\\
& \le & \|\psi\|_{L^{\infty}} \sum_{i=1}^{\infty} \frac{1}{\epsilon} \int_{\Lambda_i^{\epsilon}}  d\varrho \Xi_i^{\epsilon}(\varsigma) \le \|\psi\|_{L^{\infty}}.
\end{eqnarray*}
By taking supremum over $\varsigma$ on both side, we conclude that \eqref{16} holds. Next, Let us take
\begin{eqnarray*}
|\psi_{\epsilon}(\varsigma)-\psi(\varsigma)|& \le &  \sum_{i=1}^{\infty} \frac{1}{\epsilon} \int_{\Lambda_i^{\epsilon}} |\psi(\varrho)-\psi(\varsigma)| d\varrho \Xi_i^{\epsilon}(\varsigma) \\
& \le & \sup_{|\varrho-\varsigma|\le \epsilon}|\psi(\varrho)-\psi(\varsigma)|.
\end{eqnarray*}
Then, by taking supremum on both side and letting $\epsilon \to 0$, we complete proof of Lemma \ref{conv1} (i).

Next, In order to prove Lemma \ref{conv1}(ii), we infer from \eqref{d5} that
\begin{eqnarray*}
|T_{\epsilon}(\psi_{\epsilon})(\varsigma, \varrho)- \psi(\varsigma-\varrho)| &\le & \sum_{i=1}^{\infty}\sum_{j=1}^{i-1} \frac{1}{\epsilon} \int_{\Lambda_{i-j}^{\epsilon}} |\psi(z)-\psi(\varsigma-\varrho)|  \Xi_i^{\epsilon}(\varsigma) \Xi_j^{\epsilon}(\varrho) dz\\
& \le & \sup_{|z-(\varsigma-\varrho)|\le \epsilon} |\psi(z)-\psi(\varsigma-\varrho)| .
\end{eqnarray*}
Therefore, after taking supremum both side and then $\epsilon \to 0$, we obtain
$$\|T_{\epsilon}(\psi_{\epsilon})(\varsigma, \varrho)-\psi(\varsigma-\varrho)\|_{L^{\infty}} \to 0 .$$
Since, $\epsilon$ is arbitrary then we complete the proof of Lemma \ref{conv1}(ii). 
Finally, by using definition of $\mathfrak{K}_{\epsilon}$ as in \eqref{ca2}, we obtain
\begin{eqnarray*}
| \mathfrak{K}_{\epsilon} (\varsigma, \varrho)| &\le & \sum_{i=1}^{\infty} \sum_{j=1}^{\infty} \frac{1}{\epsilon^2} \int_{\Lambda_i^{\epsilon} \times \Lambda_j^{\epsilon}} |\mathfrak{K} (\varsigma' ,\varrho')| d\varrho' d\varsigma' \Xi_i^{\epsilon} \Xi_j^{\epsilon}\\
& \le & \sup_{\substack{|\varsigma| \le \epsilon \\ |\varrho| \le \epsilon}} |\mathfrak{K} (\varsigma, \varrho)|
\end{eqnarray*}
In the same way, we get
\begin{eqnarray*}
|\mathfrak{K}_{\epsilon} (\varsigma, \varrho)- \mathfrak{K} (\varsigma, \varrho)| \le \sup_{\substack{|\varsigma' -\varsigma| \le \epsilon \\ |\varrho'-\varrho| \le \epsilon}} |\mathfrak{K} (\varsigma', \varrho')- \mathfrak{K}(\varsigma, \varrho)|.
\end{eqnarray*}
By taking $\epsilon \to 0$, we complete the proof of Lemma \ref{conv1}.
\end{proof}

Let us state following lemma which is required to show the convergence, see \cite[Lemma A.2]{Laurencot9} for the proof. 
\begin{lemma}\label{conv2}
Let $(\rho_n)_{n\ge 1}$ and $(\vartheta_n)_{n\ge 1}$ be two sequences in $L^1(\mathfrak{U})$ and $L^{\infty}(\mathfrak{U})$ respectively, where $\mathfrak{U}$ be an open set of $\mathbb{R}^m$ with $ m\ge1$. We assume that there exists $\varrho$ in $L^1(\mathfrak{U})$, $\vartheta$ in $L^{\infty}(U)$ and $c>0$ such that
\begin{eqnarray*}
\rho_n \rightharpoonup \rho \ \ \ \mbox{in} \ \ L^1(\mathfrak{U})\\
\|\vartheta_n\|_{L^{\infty}} \le c \ \ \ \ \mbox{and} \ \ \ \vartheta_n \to \vartheta \ \ \ \mbox{a.e. in} \ \ \mathfrak{U}.
\end{eqnarray*}
Then 
\begin{eqnarray*}
\lim_{n\to \infty}\| \rho_n(\vartheta_n -\vartheta)\|_{L^1}=0
\end{eqnarray*}
and 
\begin{eqnarray*}
\rho_n \vartheta_n \rightharpoonup \rho \vartheta \ \ \ \mbox{in} \ \ L^1(\mathfrak{U})
\end{eqnarray*}
\end{lemma}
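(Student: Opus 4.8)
The plan is to reduce the statement to two classical tools: the Dunford--Pettis characterization of weak $L^1$ compactness applied to $(\rho_n)$, and Egorov's theorem applied to $(\vartheta_n)$. First I would note that since $\rho_n \rightharpoonup \rho$ in $L^1(\mathfrak{U})$, the sequence $(\rho_n)$ is weakly relatively compact in $L^1(\mathfrak{U})$; by the Dunford--Pettis theorem it is therefore uniformly integrable and, when $\mathfrak{U}$ has infinite measure, additionally tight in the sense that no mass escapes to infinity. In particular $M := \sup_n \|\rho_n\|_{L^1(\mathfrak{U})} < \infty$, and the a.e.\ bound $|\vartheta_n - \vartheta| \le 2c$ holds.

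To prove the first assertion, I would fix $\eta > 0$. Using tightness I first choose a set $A \subset \mathfrak{U}$ of finite measure with $\sup_n \int_{\mathfrak{U}\setminus A}|\rho_n|\,dx \le \eta$, which disposes of the infinite-measure difficulty. On the finite-measure set $A$, since $\vartheta_n - \vartheta \to 0$ a.e.\ and is uniformly bounded, Egorov's theorem produces $B \subset A$ with $|A \setminus B|$ as small as I like, on which the convergence $\vartheta_n \to \vartheta$ is uniform; choosing $|A\setminus B|$ below the uniform-integrability threshold of $(\rho_n)$ at level $\eta$, I split
$$\int_{\mathfrak{U}}|\rho_n|\,|\vartheta_n - \vartheta|\,dx = \int_{\mathfrak{U}\setminus A} + \int_{A\setminus B} + \int_B$$
and bound the first two integrals by $2c\eta$ each (via the $L^\infty$ bound together with tightness, respectively uniform integrability) and the last by $M \sup_B |\vartheta_n - \vartheta| \to 0$. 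Letting $n \to \infty$ and then $\eta \to 0$ yields $\|\rho_n(\vartheta_n - \vartheta)\|_{L^1(\mathfrak{U})} \to 0$.

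The second assertion then follows from the decomposition $\rho_n\vartheta_n = \rho_n(\vartheta_n - \vartheta) + \rho_n\vartheta$. The first term converges to $0$ strongly, hence weakly, in $L^1(\mathfrak{U})$ by what was just proved. For the second, I would test against an arbitrary $g \in L^\infty(\mathfrak{U})$: since $\vartheta g \in L^\infty(\mathfrak{U})$ and $\rho_n \rightharpoonup \rho$, one has $\int_{\mathfrak{U}} \rho_n \vartheta g\,dx \to \int_{\mathfrak{U}} \rho\vartheta g\,dx$, so $\rho_n\vartheta \rightharpoonup \rho\vartheta$ in $L^1(\mathfrak{U})$. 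Adding the two contributions gives $\rho_n\vartheta_n \rightharpoonup \rho\vartheta$.

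I expect the main obstacle to be the possibly infinite measure of $\mathfrak{U}$: a direct application of Egorov's theorem is not available there, so it is essential to use the full strength of Dunford--Pettis (tightness, not merely equi-integrability) to truncate to a finite-measure set before invoking Egorov. Once that reduction is in place, the remainder is the routine three-piece splitting estimate above.
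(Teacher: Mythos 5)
Your proposal is correct, and it is essentially the canonical proof of this lemma: the paper itself gives no argument here, deferring entirely to Lemma A.2 of the cited reference of Lauren\c{c}ot, whose proof is exactly your combination of Dunford--Pettis (uniform integrability plus tightness of $(\rho_n)$) with Egorov's theorem on a finite-measure truncation and the three-piece splitting, followed by the decomposition $\rho_n\vartheta_n=\rho_n(\vartheta_n-\vartheta)+\rho_n\vartheta$ for the weak convergence. You also correctly identify the only delicate point, namely that $\mathfrak{U}$ may have infinite measure, so tightness must be invoked before Egorov can be applied.
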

Now, let $\psi \in \mathcal{D}(\mathbb{R}_+)$ such that for some $L \in (1, \infty)$, $\mbox{supp}(\psi) \subset [0,L-1]$, and define $\psi_{\epsilon}$ by \eqref{d4}. Let $T>0$ and $R>L$, then from Lemma \ref{weakl1},
$$ \mathfrak{f}_{\epsilon}(\varsigma, t) \mathfrak{f}_{\epsilon}(\varrho, t) \rightharpoonup \mathfrak{f}(\varsigma, t) \mathfrak{f}(\varrho, t) \ \ \ \mbox{in} \ \  L^1((0,T) \times (0,R)^2)$$

Next, from \eqref{d6}, the definition of $\ell_{\epsilon}$ shows that for a.e. $\varsigma \in \mathbb{R}_+$
$$ \mathds{1}_{[0,\ell_{\epsilon}(\varsigma)]} \xrightarrow{\epsilon \to 0} \mathds{1}_{[0, \varsigma]} .$$
Then Lemma \ref{conv1} implies that
\begin{eqnarray*}
& &\mathfrak{K}_{\epsilon}(\varsigma, \varrho)[T_{\epsilon}(\psi_{\epsilon})(\varsigma, \varrho)- \psi_{\epsilon}(\varsigma)- \psi_{\epsilon}(\varrho)] \mathds{1}_{[0,\ell_{\epsilon}(\varsigma)]}\\
& &\hspace{4cm} \xrightarrow{\epsilon \to 0} \mathfrak{K}(\varsigma, \varrho)[\psi(\varsigma- \varrho)- \psi(\varsigma)- \psi(\varrho)] \mathds{1}_{[0,\varsigma]}.
\end{eqnarray*}
Owing the bounds on $\psi_{\epsilon}$, $T_{\epsilon}(\psi_{\epsilon})$ and $\mathfrak{K}_{\epsilon}$ as stated in Lemma \ref{conv1}, we may use Lemma \ref{conv2} to derive that
\begin{eqnarray}\label{l1}
& &\int_0^T \int_0^R \int_0^R \mathfrak{K}_{\epsilon}(\varsigma, \varrho)\mathfrak{f}_{\epsilon}(\varsigma, t) \mathfrak{f}_{\epsilon}(\varrho, t) [T_{\epsilon}(\psi_{\epsilon})(\varsigma, \varrho)- \psi_{\epsilon}(\varsigma)- \psi_{\epsilon}(\varrho)] \mathds{1}_{[0,\ell_{\epsilon}(\varsigma)]}(\varrho)d\varrho d\varsigma dt \nonumber\\
& &\xrightarrow{\epsilon \to 0} \int_0^T \int_0^R \int_0^R \mathfrak{K}(\varsigma, \varrho)\mathfrak{f}(\varsigma, t) \mathfrak{f}(\varrho, t)[\psi(\varsigma- \varrho)- \psi(\varsigma)- \psi(\varrho)] \mathds{1}_{[0,\varsigma]}(\varrho) d\varrho d\varsigma dt.
\end{eqnarray}

It can easily be seen from \eqref{ker}-\eqref{ker2} and \eqref{ca2} that
\begin{eqnarray}
\sup_{\varrho \ge R} \sup_{\varsigma \in (0,L)} \frac{\mathfrak{K}_{\epsilon}(\varsigma, \varrho)}{\varrho} = & \Bar{\omega}_L(R), \ \ \ \ \ \ \ \ \ \ \lim_{R\to \infty}\Bar{\omega}_L(R)=0 \label{C1}\\
\sup_{\varsigma \ge R} \frac{r_{\epsilon}(\varsigma)}{\varsigma}= &\Bar{\Omega}_R. \ \ \ \ \ \ \ \ \ \ \lim_{R\to \infty}\Bar{\Omega}_R=0\label{C2}.
\end{eqnarray}
\newpage
Furthermore, we use \eqref{16}-\eqref{17}, \eqref{C1}-\eqref{C2} and Lemma \ref{me1} to obtain

\begin{eqnarray}
& &\bigg|\int \int_{\mathbb{R}_+^2 \backslash [0,R]^2} \mathfrak{K}_{\epsilon}(\varsigma, \varrho)\mathfrak{f}_{\epsilon}(\varsigma, t) \mathfrak{f}_{\epsilon}(\varrho, t) T_{\epsilon}(\psi_{\epsilon})(\varsigma, \varrho) \mathds{1}_{[0,\ell_{\epsilon}(\varsigma)]}(\varrho)d\varrho d\varsigma\bigg|\nonumber\\
& \le & \|\psi\|_{L^{\infty}} \bigg|2 \int_R^{\infty} \int_0^R \mathfrak{K}_{\epsilon}(\varsigma, \varrho) \mathfrak{f}_{\epsilon}(\varsigma, t) \mathfrak{f}_{\epsilon}(\varrho, t) d\varrho d\varsigma +\int_R^{\infty} \int_R^{\infty} \mathfrak{K}_{\epsilon}(\varsigma, \varrho) \mathfrak{f}_{\epsilon}(\varsigma, t) \mathfrak{f}_{\epsilon}(\varrho, t) d\varrho d\varsigma\bigg| \nonumber\\
& \le &  2\|\psi\|_{L^{\infty}} \bigg| \int_R^{\infty} \int_0^L \mathfrak{K}_{\epsilon}(\varsigma, \varrho) \mathfrak{f}_{\epsilon}(\varsigma, t) \mathfrak{f}_{\epsilon}(\varrho, t) d\varrho d\varsigma+ \int_R^{\infty} \int_L^{\infty} \mathfrak{K}_{\epsilon}(\varsigma, \varrho) \mathfrak{f}_{\epsilon}(\varsigma, t) \mathfrak{f}_{\epsilon}(\varrho, t) d\varrho d\varsigma\bigg| \nonumber\\
&\le & 2\|\psi\|_{L^{\infty}} \bigg| \int_R^{\infty} \int_0^L \mathfrak{K}_{\epsilon}(\varsigma, \varrho) \mathfrak{f}_{\epsilon}(\varsigma, t) \mathfrak{f}_{\epsilon}(\varrho, t) d\varrho d\varsigma+ (1+A) \int_R^{\infty} \int_L^{\infty} r_{\epsilon}(\varsigma) r_{\epsilon}(\varrho) \mathfrak{f}_{\epsilon}(\varsigma, t) \mathfrak{f}_{\epsilon}(\varrho, t) d\varrho d\varsigma\bigg|\nonumber \\
&\le & 4\|\mathfrak{f}^{\mbox{in}}\|_{0,1}^2\|\psi\|_{L^{\infty}} [ \Bar{\omega}_L(R)+ 2 \Bar{\Omega_R} \bar{\Omega}_L] \xrightarrow{R \to \infty} 0,\label{l2}
\end{eqnarray}

uniformly for all $\epsilon \in (0,1)$. Similarly, by using \eqref{ker}-\eqref{ker2} and \eqref{bound_r}, we obtain

\begin{eqnarray}
& &\bigg|\int \int_{\mathbb{R}_+^2 \backslash [0,R]^2} \mathfrak{K}(\varsigma, \varrho)\mathfrak{f}(\varsigma, t) \mathfrak{f}(\varrho, t) \psi(\varsigma- \varrho) \mathds{1}_{[0,\varsigma]}(\varrho)d\varrho d\varsigma\bigg| \nonumber\\
& & \hspace{2cm} \le 4\|\mathfrak{f}^{\mbox{in}}\|_{0,1}^2\|\psi\|_{L^{\infty}} [ {\omega}_L(R)+ 2 \Omega_R \Omega_L] \xrightarrow{R \to \infty} 0. \label{l3}
\end{eqnarray}

%
Finally, for Large enough $R$, we get
\begin{eqnarray}
	& &\int_0^{\infty}\int_0^{\ell_{\epsilon}(\varsigma)}[T_{\epsilon}(\psi_{\epsilon}) (\varsigma, \varrho)- \psi_{\epsilon}(\varsigma)- \psi_{\epsilon}(\varrho)] \mathfrak{K}_{\epsilon}(\varsigma, \varrho)\mathfrak{f}_{\epsilon}(\varrho,s) d\varrho d\varsigma \nonumber\\
	& =&\int_0^{R}\int_0^{\ell_{\epsilon}(\varsigma)}[T_{\epsilon} (\psi_{\epsilon}) (\varsigma, \varrho)- \psi_{\epsilon}(\varsigma)- \psi_{\epsilon}(\varrho)] \mathfrak{K}_{\epsilon}(\varsigma, \varrho)\mathfrak{f}_{\epsilon}(\varrho,s) d\varrho d\varsigma \nonumber\\
	& &+\int \int_{\mathbb{R}_+^2 \backslash [0,R]^2}[T_{\epsilon}(\psi_{\epsilon}) (\varsigma, \varrho)- \psi_{\epsilon}(\varsigma)- \psi_{\epsilon}(\varrho)] \mathfrak{K}_{\epsilon}(\varsigma, \varrho)\mathfrak{f}_{\epsilon}(\varrho,s) \mathds{1}_{[0,\ell_{\epsilon}(\varsigma)]}(\varrho) d\varrho d\varsigma. \label{l4}
\end{eqnarray}
Now, on the left hand side of \eqref{15}, we have to pass the limit as $\epsilon \to 0$. It follows from Lemma \ref{me1}, \ref{weakl1} and \ref{conv1} that
\begin{eqnarray*}
& &\bigg| \int_0^{\infty} \mathfrak{f}_{\epsilon}(\varsigma,t) \psi_{\epsilon}(\varsigma) d\varsigma - \int_0^{\infty} \mathfrak{f}(\varsigma,t) \psi(\varsigma) d\varsigma\bigg|\\
& & \hspace{1cm} \le \int_0^{\infty} |\psi_{\epsilon}(\varsigma)| |\mathfrak{f}_{\epsilon}(\varsigma, t)-\mathfrak{f}(\varsigma, t)| d\varsigma + \|\mathfrak{f}^{\mbox{in}}\|_{0,1} \|\psi_{\epsilon}(\varsigma) -\psi(\varsigma)\|_{L^{\infty}}.
\end{eqnarray*}
Since, $\psi_{\epsilon} \in L^{\infty}(0, \infty)$, then by lemma \ref{weakl1} and \eqref{16}, we obtain for each $t >0$
\begin{eqnarray}\label{19}
\int_0^{\infty}\mathfrak{f}_{\epsilon}(\varsigma,t) \psi_{\epsilon}(\varsigma) d\varsigma \xrightarrow{\epsilon \to 0} \int_0^{\infty} \mathfrak{f}(\varsigma,t) \psi(\varsigma) d\varsigma.
\end{eqnarray}
Moreover,
$$ \mathfrak{f}_{\epsilon} (\varsigma, 0) \to \mathfrak{f}^{\mbox{in}} \ \ \ \ \mbox{in} \ \ \ L^1(\mathbb{R}_+),$$
whence, by Lemma \ref{conv1},
\begin{eqnarray}\label{20}
\int_0^{\infty}\mathfrak{f}_{\epsilon}(\varsigma,0) \psi_{\epsilon}(\varsigma) d\varsigma \xrightarrow{\epsilon \to 0} \int_0^{\infty} \mathfrak{f}^{\mbox{in}}(\varsigma) \psi(\varsigma) d\varsigma.
\end{eqnarray}
Gathering \eqref{l1}-\eqref{20}, we are now able to pass the limit as $\epsilon \to 0$ in \eqref{15}, and demonstrate that the function $\mathfrak{f}$ meets the conditions outlined in \eqref{wsol} and, as a result, considered a weak solution to the continuous RBK equation \eqref{crbk}-\eqref{ic}. This completes proof of Theorem \ref{distocont}.

\medskip

\textbf{Acknowledgements:} The author extends sincere thanks to Prof. Ankik Kumar Giri, Associate Professor at Indian Institute of Technology Roorkee, Roorkee, for his support and belief in my abilities. His mentorship has not only impacted my professional development but has also instilled in me a deeper sense of confidence and determination.

\bibliography{Ref.bib}

\begin{thebibliography}{10}

\bibitem{Aizenman}
M.~Aizenman and T.~A. Bak.
\newblock Convergence to equilibrium in a system of reacting polymers.
\newblock {\em Communications in Mathematical Physics}, 65(3):203--230, 1979.

\bibitem{DtoC2}
V.~Bagland.
\newblock Convergence of a discrete oort--hulst--safronov equation.
\newblock {\em Mathematical methods in the applied sciences},
  28(13):1613--1632, 2005.

\bibitem{ball}
J.~M. Ball and J.~Carr.
\newblock The discrete coagulation-fragmentation equations: existence,
  uniqueness, and density conservation.
\newblock {\em Journal of Statistical Physics}, 61:203--234, 1990.

\bibitem{Barik1}
P.~K. Barik and A.~K. Giri.
\newblock Global classical solutions to the continuous coagulation equation
  with collisional breakage.
\newblock {\em Zeitschrift f{\"u}r angewandte Mathematik und Physik}, 71:1--23,
  2020.

\bibitem{Barik0}
P.~K. Barik, A.~K. Giri, and P.~Lauren{\c{c}}ot.
\newblock Mass-conserving solutions to the smoluchowski coagulation equation
  with singular kernel.
\newblock {\em Proceedings of the Royal Society of Edinburgh Section A:
  Mathematics}, 150(4):1805--1825, 2020.

\bibitem{Bodnar}
T.~Bodn{\'a}r, A.~Fasano, and A.~Sequeira.
\newblock Mathematical models for blood coagulation.
\newblock {\em Fluid-structure interaction and biomedical applications}, pages
  483--569, 2014.

\bibitem{Bouzoubaa}
I.~Bouzoubaa, K.~Hamdache, and A.~Noussair.
\newblock A global weak solution via transport-projection scheme to a thin film
  growth kinetic model with nonlocal boundary conditions.
\newblock {\em SIAM journal on numerical analysis}, 39(5):1810--1834, 2002.

\bibitem{Bruno}
O.~Bruno, A.~Friedman, and F.~Reitich.
\newblock Asymptotic behavior for a coalescence problem.
\newblock {\em Transactions of the American Mathematical Society},
  338(1):133--158, 1993.

\bibitem{dacosta1}
F.~da~Costa, J.~Pinto, and R.~Sasportes.
\newblock The redner--ben-avraham--kahng cluster system.
\newblock {\em S{\~a}o Paulo Journal of Mathematical Sciences}, 6(2):171--201,
  2012.

\bibitem{dacosta2}
F.~da~Costa, J.~T. Pinto, and R.~Sasportes.
\newblock The redner--ben-avraham--kahng coagulation system with constant
  coefficients: the finite-dimensional case.
\newblock {\em Zeitschrift f{\"u}r angewandte Mathematik und Physik},
  66:1375--1385, 2015.

\bibitem{Drake}
R.~L. Drake.
\newblock A general mathematical survey of the coagulation equation.
\newblock 1972.

\bibitem{Giri1}
A.~K. Giri, P.~Lauren{\c{c}}ot, and G.~Warnecke.
\newblock Weak solutions to the continuous coagulation equation with multiple
  fragmentation.
\newblock {\em Nonlinear Analysis: Theory, Methods \& Applications},
  75(4):2199--2208, 2012.

\bibitem{Redner2}
I.~Ispolatov, P.~Krapivsky, and S.~Redner.
\newblock War: The dynamics of vicious civilizations.
\newblock {\em Physical Review E}, 54(2):1274, 1996.

\bibitem{Lachowicz}
M.~Lachowicz, P.~Lauren{\c{c}}ot, and D.~Wrzosek.
\newblock On the oort--hulst--safronov coagulation equation and its relation to
  the smoluchowski equation.
\newblock {\em SIAM journal on mathematical analysis}, 34(6):1399--1421, 2003.

\bibitem{Laurencot2}
P.~Lauren{\c{c}}ot.
\newblock On a class of continuous coagulation-fragmentation equations.
\newblock {\em Journal of differential equations}, 167(2):245--274, 2000.

\bibitem{Laurencot3}
P.~Lauren{\c{c}}ot.
\newblock Weak compactness techniques and coagulation equations.
\newblock In {\em Evolutionary equations with applications in natural
  sciences}, pages 199--253. Springer, 2014.

\bibitem{Laurencot9}
P.~Lauren{\c{c}}ot and S.~Mischler.
\newblock The continuous coagulation-fragmentation{\'s}equations with
  diffusion.
\newblock {\em Archive for rational mechanics and analysis}, 162:45--99, 2002.

\bibitem{DtoC1}
P.~Lauren{\c{c}}ot and S.~Mischler.
\newblock From the discrete to the continuous coagulation--fragmentation
  equations.
\newblock {\em Proceedings of the Royal Society of Edinburgh Section A:
  Mathematics}, 132(5):1219--1248, 2002.

\bibitem{Laurencot1}
P.~Lauren{\c{c}}ot and S.~Mischler.
\newblock On coalescence equations and related models.
\newblock {\em Modeling and computational methods for kinetic equations}, pages
  321--356, 2004.

\bibitem{Muller}
H.~M{\"u}ller.
\newblock Zur allgemeinen theorie ser raschen koagulation: Die koagulation von
  st{\"a}bchen-und bl{\"a}ttchenkolloiden; die theorie beliebig polydisperser
  systeme und der str{\"o}mungskoagulation.
\newblock {\em Kolloidchemische Beihefte}, 27:223--250, 1928.

\bibitem{Redner1}
S.~Redner, D.~Ben-Avraham, and B.~Kahng.
\newblock Kinetics of'cluster eating'.
\newblock {\em Journal of Physics A: Mathematical and General},
  20(5):1231--1238, 1987.

\bibitem{Smoluchowski}
M.~v. Smoluchowski.
\newblock Versuch einer mathematischen theorie der koagulationskinetik
  kolloider l{\"o}sungen.
\newblock {\em Zeitschrift f{\"u}r physikalische Chemie}, 92(1):129--168, 1918.

\bibitem{stewart2}
I.~Stewart.
\newblock A uniqueness theorem for the coagulation-fragmentation equation.
\newblock In {\em Mathematical Proceedings of the Cambridge Philosophical
  Society}, volume 107, pages 573--578. Cambridge University Press, 1990.

\bibitem{stewart1}
I.~Stewart and E.~Meister.
\newblock A global existence theorem for the general coagulation--fragmentation
  equation with unbounded kernels.
\newblock {\em Mathematical Methods in the Applied Sciences}, 11(5):627--648,
  1989.

\bibitem{ankik1}
P.~Verma and A.~K. Giri.
\newblock Existence of solutions to the continuous rednerben-avrahamkahng
  coagulation equation.
\newblock {\em arXiv preprint arXiv:2307.01677}, 2023.

\bibitem{Verma1}
P.~Verma and A.~K. Giri.
\newblock Weak solutions to the discrete redner-ben-avraham-kahng coagulation
  model.
\newblock {\em arXiv preprint arXiv:2307.08868}, 2023.

\bibitem{ankik}
P.~Verma, A.~K. Giri, and F.~P.~d. Costa.
\newblock The continuous redner--ben-avraham--kahng coagulation system:
  well-posedness and asymptotic behaviour.
\newblock {\em Evolution Equations and Control Theory}, 2023.

\bibitem{Vrabie}
I.~I. Vrabie.
\newblock {\em Compactness methods for nonlinear evolutions}, volume~75.
\newblock CRC Press, 1995.

\bibitem{Ziff}
R.~M. Ziff and E.~McGrady.
\newblock Kinetics of polymer degradation.
\newblock {\em Macromolecules}, 19(10):2513--2519, 1986.

\end{thebibliography}
\bibliographystyle{abbrv}

\end{document}